\newcommand{\cC}[0]{%
\mathcal{C}}
\newcommand{\cD}[0]{%
  \mathcal{D}}
\DeclareMathOperator{\Nodes}{Nodes}
\DeclareMathOperator{\Edges}{Edges}
\DeclareMathOperator{\Ob}{Ob}
\DeclareMathOperator{\Mor}{Mor}
\newcommand{\Mon}[0]{\mathbf{Mon}}
\newcommand{\Nom}[0]{\mathbf{Nom}}
\newcommand{\Cat}[0]{\mathbf{Cat}}
\tikzstyle{big-triangle}=[triangle,draw,inner sep=0pt,minimum width=0.5cm,minimum height=0.15cm]
\tikzstyle{medium-triangle}=[big-triangle,scale=0.7]
\tikzstyle{vbig-triangle}=[big-triangle,scale=2]
\tikzstyle{mbig-triangle}=[big-triangle,scale=1.2]
\tikzstyle{small-triangle}=[big-triangle,scale=0.25]
\title{Autonomization of monoidal categories}
\author{Antonin Delpeuch
\institute{Department of Computer Science\\
University of Oxford}
\email{antonin.delpeuch@cs.ox.ac.uk}}
\begin{document}

\maketitle

\newtheorem{defi}{Definition}
\newtheorem{lemma}{Lemma}
\newtheorem{prop}{Proposition}
\newtheorem{thm}{Theorem}
\newtheorem{coro}{Corollary}
\newtheorem{conj}{Conjecture}
\renewcommand{\vec}[1]{%
  \overrightarrow{#1}}

\begin{abstract}
We show that contrary to common belief in the DisCoCat community, a monoidal
category is all that is needed to define a categorical compositional model of natural language. This relies on a construction which freely adds adjoints to a monoidal category. In the case of distributional semantics, this broadens the range of available models, to include non-linear maps and cartesian products for instance. We illustrate the applications of this principle to various distributional models of meaning.
\end{abstract}

\section{Introduction}

The DisCoCat
model~\cite{clark2008compositional,coecke2010mathematical} applies
category theory to linguistics. The idea is to view the type-logical
representation of grammatical structure~\cite{lambek2008pregroup} as
a morphism in a compact closed category, and use this morphism to
combine individual word meanings into sentence representations.  This
gives an inherently compositional template for models of meaning which
respect the grammatical structure by design. The flagship application
of this idea is its interpretation it in vector-based categories, as
it makes it applicable to distributional semantics (hence called the
\textbf{Dis}tributional \textbf{Co}mpositional \textbf{Cat}egorical
model). Distributional semantics represent word meanings by computing
co-occurence statistics of words across large
corpora~\cite{schutze1998automatic}, or more recently by training
neural networks to learn word embeddings~\cite{mikolov2013efficient}.
The task of generalizing these representations from words to larger
syntactical units such as sentences is a challenging task, and the
promise of the DisCoCat approach is to tackle it with a categorical
viewpoint.

The mantra of DisCoCat is that a model of meaning is given by a
compact closed category and a choice of word meanings in
it~\cite{coecke2010mathematical}. This definition is flexible in that
it encompasses both logical models of meaning and distributional ones,
depending on the concrete category
chosen~\cite{clark2008compositional}. However, the requirement to
define a compact closed category is still a restricting one, as it
rules out a wide range of categories where objects do not always have
adjoints. The general consensus in the community was that when it
comes to vector-based models of meanings, DisCoCat requires the
semantics to be assembled out of linear maps
only~\cite{wijnholds2018classical,lewis2019compositionality}. We show
how to lift this restriction, by only requiring the semantic category
to be monoidal, and freely adding the caps and cups required for the
interpretation of the grammatical
structure~\cite{delpeuch2014autonomization}. We briefly present these
results in Section~\ref{sec:autonomization}.

The bulk of this piece consists in showing how various models of
meaning formulated outside the DisCoCat framework can actually be
recast in it, thanks to the free addition of adjoints. Our aim is to
understand why the DisCoCat community has pushed itself in the narrow
corner of linear models for such a long time, in a context where the
wider linguistics community has overwhelmingly embraced non-linear
models.  By conducting this analysis, our hope is to encourage the
wider applied category theory community to confront their models with
the actual practitioners in the field targeted, and challenge any
categorical dogma with their feedback.

\section{Free autonomous categories} \label{sec:autonomization}

We recall a few useful definitions. The full details of the equations
satisfied by the following structures, as well as a presentation
of their graphical languages can be found in \cite{selinger2010survey}.

\begin{defi}
  A \textbf{monoidal category} is a category $\cC$
  equipped with a symmetric bifunctor $ \_ \otimes \_ : \cC \times \cC
  \rightarrow \cC$. This operation is furthermore required to have
  a unit $I \in \cC$ and to be naturally associative.
\end{defi}

\noindent A monoidal category is symmetric when its monoidal product is.

\begin{defi}
  An \textbf{autonomous} (or rigid) \textbf{category} is a monoidal
  category such that any object $A$ has left and right adjoints, meaning
  that there are morphisms $\epsilon_l : A^l \otimes A \rightarrow 1$,
  $\epsilon_r : A \otimes A^l \rightarrow 1$, $\eta_l : 1 \rightarrow A \otimes A^l$
  and $\eta_r : 1 \rightarrow A^r \otimes A$ satisfying equations.
\end{defi}

\noindent In a symmetric monoidal category, left and right adjoints are isomorphic. An autonomous
category that is also symmetric is called a \textbf{compact closed category}.
An example of such category is $(\mathbf{Vect}, \otimes, I)$, the category
of finite-dimensional vector spaces and linear maps, with the tensor product
as monoidal structure.

\begin{defi}
  A \textbf{cartesian category} is a category with all finite products.
  In other words it is a symmetric monoidal category with a natural family of
  copying and discarding maps satisfying the usual laws.
\end{defi}

\noindent An example of cartesian category is $(\mathbf{Vect}, \oplus,
O)$, the category of finite-dimensional vector spaces and linear maps,
with the direct sum as monoidal structure. Another canonical example
is $(\mathbf{Set}, \times, \{ * \})$, the category of sets and
functions equipped with the (set-theoretic) cartesian product.

\begin{prop} 
  If a cartesian category is also autonomous for the same monoidal structure,
  then every object is isomorphic to the monoidal unit.
\end{prop}

\noindent In other words, cartesian and autonomous structures are incompatible.

\begin{thm} \label{thm:autonomization}
  Any monoidal category $(\mathcal{C}, \_ \otimes \_, I)$ gives rise
  to a free autonomous category $(L(\mathcal{C}), (\_, \_), ())$. The
  embedding functor $F : \mathcal{C} \rightarrow L(\cC)$ is
  strong monoidal and faithful.
\end{thm}

This means that it is possible to freely add caps and cups to any
monoidal category. To emphasize the different nature of the monoidal
structures on both categories, we use different notations for
them. They are related by the embedding functor $F$ which is strong
monoidal. Informally, the category $L(\mathcal{C})$ is defined by
taking formal string diagrams annotated with morphisms of
$\mathcal{C}$. This result seems to known as folklore in some
communities. For the sake of rigour we provide a detailed construction
in Appendix~\ref{sec:construction}.

\begin{figure}
  \centering
  \begin{tikzpicture}[scale=1.3,
every node/.style={%
text height=1.5ex,text depth=0.25ex,}]
\makeatletter

\pgfdeclareshape{triangle}{
    \inheritsavedanchors[from=rectangle]
    \inheritanchorborder[from=rectangle]
    \inheritanchor[from=rectangle]{center}
    \inheritanchor[from=rectangle]{north}
    \inheritanchor[from=rectangle]{south}
    \inheritanchor[from=rectangle]{east}
    \inheritanchor[from=rectangle]{southwest}
    \inheritanchor[from=rectangle]{southeast}

    \backgroundpath{
        \southwest \pgf@xb=\pgf@x \pgf@yb=\pgf@y
        \northeast \pgf@xa=\pgf@x \pgf@ya=\pgf@y
        \pgf@xc=0.5\pgf@xa \advance\pgf@xc by+0.5\pgf@xb

        \pgfpathmoveto{\pgfpoint{\pgf@xc}{\pgf@ya}}
        \pgfpathlineto{\pgfpoint{\pgf@xb}{\pgf@yb}}
        \pgfpathlineto{\pgfpoint{\pgf@xa}{\pgf@yb}}
        \pgfpathlineto{\pgfpoint{\pgf@xc}{\pgf@ya}}
    }
}

\tikzstyle{big-triangle}=[triangle,draw,inner sep=0pt,minimum width=0.5cm,minimum height=0.15cm]
\tikzstyle{medium-triangle}=[big-triangle,scale=0.7]
\tikzstyle{vbig-triangle}=[big-triangle,scale=2]
\tikzstyle{mbig-triangle}=[big-triangle,scale=1.2]
\tikzstyle{small-triangle}=[big-triangle,scale=0.25]

\node at (0.0,0) (t2) {$n$};
\node at (0.0,1.5) (w0) {Pat};
\node at (1.5,0) (t8) {$n^r$ $s$ $n^l$};
\node at (1.5,1.5) (w1) {grows};
\node at (3.2,0) (t17) {$n$ $n^l$};
\node at (3.0,1.5) (w2) {delicious};
\node at (4.5,0) (t28) {$n$};
\node at (4.5,1.5) (w3) {kiwis};



\draw[->] (0.0,-0.25) .. controls (0.0,-0.55525) and (0.24474999999999997,-0.8) .. (0.55,-0.8) .. controls (0.8552500000000001,-0.8) and (1.1,-0.55525) .. (1.1,-0.25);

\draw[<-] (1.9,-0.25) .. controls (1.9,-0.5552499999999999) and (2.14475,-0.7999999999999998) .. (2.45,-0.7999999999999998) .. controls (2.75525,-0.7999999999999998) and (3,-0.5552499999999999) .. (3,-0.25);

\draw[->] (1.5,-0.25) -- (1.5,-1.1);

\draw[->] (4.5,-0.25) arc (0:-180:0.55cm);

\node at (1.5,-1.3) {$s$};


\node[medium-triangle] at (0.0,0.75) (fresh) {};
\draw (0.0,0.25) -- (fresh);

\node[rectangle,draw,scale=0.8,minimum width=0.4cm] at (1.5,0.62) (fresh) {$f_1$};

\coordinate (rep) at (1.1,0.25);
\draw (rep) -- (rep |- fresh.north);
\draw[->] (rep |- fresh.north) arc (180:0:0.15cm);
\coordinate (rep) at (1.9,0.25);
\draw (rep) -- (rep |- fresh.north);
\draw[->] (rep |- fresh.north) arc (0:180:0.15cm);

\draw (1.5,0.25) -- (fresh);

\node[medium-triangle] at (4.5,0.75) (fresh) {};
\draw (4.5,0.25) -- (fresh);

\begin{scope}[xshift=-3.9cm]
\node[rectangle,draw,minimum width=0.65cm,scale=0.8] at (6.9,0.6) (n) {$+$};
\draw[->] (n) -- ($(n.center)-(0,0.35)$);
\draw[<-] ($(n.north)+(0.1,0)$) arc (180:0:0.15cm);
\draw ($(n.north)+(0.4,0)$) -- (7.3,0.25);
\node[medium-triangle] at ($(n.north) +(-0.1,0.4)$) (tri) {};
\draw[<-] ($(n.north)-(0.1,0)$) -- (tri);
\end{scope}

\begin{scope}[xshift=7cm,yshift=-1cm]
    \node at (-1.3,1) {{\Large $=$}};

\node at (0.0,0) {$s$};

\node[rectangle,draw,scale=0.8,minimum width=1cm] at (0,0.7) (fresh) {$f_1$};
\draw[<-] (0,0.25) -- (fresh);

\node[medium-triangle] at (0.1,1.8) (td1) {};
\node[medium-triangle] at (0.4,1.8) (td2) {};

\node[rectangle,draw,scale=0.8,minimum width=0.7cm,fill=white] at (0.25,1.2) (n1) {$+$};

\draw[->] (n1) -- (n1 |- fresh.north);
\draw[->] (td1) -- (td1 |- n1.north);
\draw[->] (td2) -- (td2 |- n1.north);

\node[medium-triangle] at (-0.25,1.2) (tg) {};
\draw[->] (tg) -- (tg |- fresh.north);

\end{scope}

\end{tikzpicture}
  \caption{Eliminating free caps and cups from a sentence meaning}
  \label{fig:octo}
\end{figure}

Morphisms in $L( \cC)$ are of little use for semantics as they are
formal objects. Luckily, in the particular case where the domain and
codomain are objects of $\cC$, we can eliminate the formal caps and
cups introduced to recover a morphism in $\cC$.  This is an equivalent
of the normalization property of~\cite{preller2007free} for our free
construction.

\begin{thm} \label{thm:fullness}
  The functor $F$ is full.
\end{thm}

\begin{proof}
  Let $f \in L \cC(F(A),F(B))$ with $A,B \in \cC$, we show that $f \in F(\cC(A,B))$.
  If no formal caps and cups occur in $f$, then it can be expressed
  as a vertical and horizontal composition of generator morphisms from
  $\cC$, so it belongs to $F(\cC(A,B))$.
  Therefore we only need to show that all caps and cups can be eliminated
  from a diagram whose boundaries do not contain any adjoints.
  As no generator contains adjoints in their domain or codomain,
  the winding number of any wire in the diagram $f$ is null.
  For any unit occuring in $f$ we can find a matching counit
  on the same wire, such that the unit and counit can be cancelled together.
  This elimination property is shown as Lemma~3.12 in \cite{mimram20143dimensional}.
\end{proof}

 What fullness of $F$ means is that any
morphism in $L \cC$ between objects coming from $\mathcal{C}$
actually comes from $\mathcal{C}$ too. Because sentence meanings in
DisCoCat are interpreted as elements of the sentence space $S$, they
are therefore morphisms in $L \cC ([I],[S])$. By fullness this
means that they correspond to morphisms $\mathcal{C}(I,S)$ (and a
unique one by faithfulness). This means that sentence meanings are not
formal objects: they belong to the original semantic monoidal
category.

The use of $L \mathcal{C}$ as a model of meaning is illustrated in
Figure~\ref{fig:octo}. The recipe is simple:
\begin{itemize}
\item define word meanings, using formal caps to produce elements
  of the types dictated by the grammar;
\item compose word meanings with the formal cups determined
  by the type reduction witnessing the grammaticality of the sentence;
\item eliminate the formal caps and cups in the resulting diagram. Theorem~\ref{thm:fullness} guarantees that all formal caps and cups can be eliminated.
\item you obtain an element of the sentence space in the original category $\mathcal{C}$, which
  is the representation of the sentence.
\end{itemize}
In the sequel, we illustrate the concrete use of this result, showing
how it can expand the range of models of meaning which can be
formulated in the DisCoCat framework.  We also argue that it weakens
the case for $\mathbf{Vect}_\otimes$, which has so far been considered
as the canonical (if not only) semantic category suited for
distributional compositional semantics.

\section{Some early evidence against tensors}

Let us first recall how the DisCoCat model and its tensor-based
interpretation came into existence.  The interest in the tensor
product for distributional semantics was initially suggested
by~\cite{clark2007combining}, inspired by ideas from cognitive
science.  The idea was that tensors can account for correlations
between the meanings of some words. This was demonstrated by a thought
experiment called the \emph{pet fish problem}
\cite{aerts2005theory}. The idea was that the conjunction of two
concepts is not always best represented by the independent
superposition of both concepts: there is some interaction between the
two concepts. It was reported that subjects found the image of a
\emph{guppy} was neither a prototypical image of \emph{pet} nor that
of a \emph{fish}. Inspired by quantum mechanics, it was proposed to
represent concepts in Hilbert space, such that state entanglement
could be used to model this phenomenon.

This quantum-inspired model of meaning became all the more motivated
when the connection to compact closed categories was made
by~\cite{clark2008compositional,coecke2010mathematical}. Indeed,
vector spaces form a compact closed category if the tensor product is
used as monoidal structure, but this fails for other natural
structures such as the direct sum, as explained in the previous
section.  All this evidence, coming simultaneously and independently
from cognitive science and category theory, seemed to indicate in an
unambiguous way that $\mathbf{Vect}_\otimes$ was the natural
category to develop distributional compositional semantics.

A few difficulties with this proposal became soon clear, and the first
one was the high dimensionality of the word representations
involved. Indeed, the number of parameters of word representations
grows exponentially with the length of its grammatical type. This
means for instance that ditransitive verbs such as \emph{give}, which
are applied to a subject and two objects, would be represented by $n^3
s$ parameters, where $n$ is the dimensionality of the sentence space
and $n$ that of the noun space. Paradoxically, this prohibitive cost
was not seen as a fundamental problem of the model - it somehow
justified the absence of large-scale benchmarks against competing
approaches. It also suggested that the model was more expressive or
powerful, having more parameters than other models: this expressivity
was only waiting for the right hardware (such as quantum devices) to
be unleashed~\cite{zeng2016quantum}.

However, various attempts were made to learn the high-order word
representations suggested by the
framework~\cite{grefenstette2013multistep,kartsaklis2014study,grefenstette2015concrete}.
Still, these experiments were only conducted for simple syntactic
structures such as \emph{adjective-noun} or \emph{subject-verb-object}
patterns, far from the coverage of arbitrary text generally expected
in the field natural language processing.

One of these approaches was that of~\cite{polajnar2014reducing}, where
it was proposed to restrict verb meanings to particular shapes, reducing
the number of parameters to learn. The intention was to stick to the DisCoCat
framework using $\mathbf{Vect}_\otimes$, but impose further restrictions
on the semantic spaces and word meanings to obtain more tractable and effective
models. Four approaches to model transitive verbs in \emph{subject-verb-object}
sentences were compared:
\begin{enumerate}[(i)]
\item \textbf{Tensor}: the standard, unrestricted representation of
  transitive verbs as tensors of type $N \otimes N \otimes S$, where
  $N$ and $S$ are sentence and noun spaces. The space $S$ is
  two-dimensional, the two truth values corresponding to orthogonal
  basis vectors.
\item \textbf{KKMat}: the same approach, but collapsing $S$ to a
  single dimension, the truth value being represented by the norm of
  the vector.
\item \textbf{SKMat}: verbs are represented matrices in $N \otimes S$
  (where $S$ is two dimensional again). They are multiplied by the
  object vector seen as a diagonal matrix, and then contracted with
  the subject vector.
\item \textbf{2Mat}: verbs are represented by two $N \otimes S$
  tensors. They are multiplied with the subject and object
  respectively to obtain two vectors in $S$, which are finally
  concatenated into a four-dimensional vector.
\end{enumerate}

\begin{figure}[H]
  \centering
  \begin{subfigure}{0.45\textwidth}
    \centering
    \begin{tikzpicture}[scale=2]
      \node[vbig-triangle,minimum width=1cm,minimum height=0.5cm] (t) at (0,0) {};
      \node (rep) at (-.3,-.5) {$N$};
      \draw (rep) -- (rep |- t.south);
      \node (rep) at (.3,-.5) {$N$};
      \draw (rep) -- (rep |- t.south);
      \node (rep) at (0,-.5) {$S$};
      \draw (rep) -- (rep |- t.south);
    \end{tikzpicture}
    \caption{\textbf{Tensor}, the most generic form for a transitive verb}
  \end{subfigure}
  \begin{subfigure}{0.45\textwidth}
    \centering
    \begin{tikzpicture}[scale=2]
      \node[vbig-triangle,minimum width=1cm,minimum height=0.5cm] (t) at (0,0) {};
      \node (rep) at (-.3,-.5) {$N$};
      \draw (rep) -- (rep |- t.south);
      \node (rep) at (.3,-.5) {$N$};
      \draw (rep) -- (rep |- t.south);
      \node[gray] (rep) at (0,-.5) {$(S)$};
    \end{tikzpicture}
    \caption{\textbf{KKMat}, assuming that $S = I$}
  \end{subfigure}
  \begin{subfigure}{0.45\textwidth}
    \centering
    \begin{tikzpicture}[scale=2]
      \node[rectangle,draw,minimum width=0.5cm,minimum height=0.5cm] (t) at (0,-.1cm) {};
      \node[circle,draw,inner sep=2pt] (c) at (-.15,.4cm) {};
      \draw (t.north) -- +(0,.1cm) .. controls ($(c)+(.15,-.2)$) and ($(c)+(.15,-.1)$) .. (c);
      \node (rep) at (-.3,-.5) {$N$};
      \node (j) at ($(t.north)+(0,.1)$) {};
      \draw (rep) -- (rep |- j) .. controls ($(c)+(-.15,-.2)$) and ($(c)+(-.15,-.1)$) .. (c);
      \node (rep) at (.3,-.5) {$N$};
     \draw (rep) -- (rep |- c.north) edge[bend right=90] (c);
      \node (rep) at (0,-.5) {$S$};
      \draw (rep) -- (rep |- t.south);
      \node[node distance=.55cm,above of=t,fill=white,inner sep=1pt] {$N$};
    \end{tikzpicture}
    \caption{\textbf{SKMat}, where the circular node is the Frobenius copy map for the canonical basis}
  \end{subfigure}
    \begin{subfigure}{0.45\textwidth}
    \centering
    \begin{tikzpicture}[scale=2]
      \node[rectangle,draw,red,minimum width=0.7cm,minimum height=0.5cm] (t) at (0,-.15cm) {$c$};
      \node[rectangle,draw,minimum width=0.3cm,minimum height=0.5cm] (a) at (-.12,.45cm) {};
      \node[rectangle,draw,minimum width=0.3cm,minimum height=0.5cm] (b) at (.12,.45cm) {};
      \node (rep) at (-.3,-.5) {$N$};
      \node (j) at ($(t.north)+(0,.1)$) {};
      \draw (rep) -- (rep |- a.north) edge[bend left=90] (a.north);
      \node (rep) at (.3,-.5) {$N$};
      \draw (rep) -- (rep |- b.north) edge[bend right=90] (b.north);
      \node (rep) at (0,-.5) {$2S$};
      \draw (rep) -- (rep |- t.south);
      \draw (a.south) -- (a.south |- t.north);
      \draw (b) -- (b |- t.north);
      \node[below of=a,inner sep=2pt,fill=white,node distance=0.6cm] {$S$};
            \node[below of=b,inner sep=2pt,fill=white,node distance=0.6cm] {$S$};
    \end{tikzpicture}
    \caption{\textbf{2Mat}, where $c : S \otimes S \rightarrow 2S$ is the concatenation function (does not exist)}
  \end{subfigure}
    \caption{The four approaches to dimensionality reduction studied by~\cite{polajnar2014reducing}}
    \label{fig:2mat}
\end{figure}
These four approaches, represented as string diagrams in Figure~\ref{fig:2mat}, were evaluated on a task which consists in
estimating the plausibility of a candidate sentence and the best model
overall was \textbf{2Mat}. What the authors did not realize however
was that unlike the three other approaches, \textbf{2Mat} cannot be
formulated in $\mathbf{Vect}_\otimes$ since it involves concatenating
vectors, which is not a bilinear operation. In
$\mathbf{Vect}_\otimes$, any morphism taking two arguments as inputs
is necessarily bilinear in them, since it is a linear map from the
tensor product of their spaces. So there is no function $c : S \otimes S \rightarrow S \oplus S$ such that $c(a \otimes b) = (a,b)$ for all $a,b \in S$.  The fact that the best performing
approach to dimensionality reduction in DisCoCat was a model that
actually violated the assumptions of DisCoCat could have been seen as
a sign that the assumptions of the model should have been revisited, but
it actually went largely unnoticed in the community.

The reason why the \textbf{2Mat} model was allowed to slip out of
$\mathbf{Vect}_\otimes$ is that it was formulated in terms of function
application rather than tensor contraction. These two views are
equivalent in a compact closed category such as
$\mathbf{Vect}_\otimes$ but only when the appropriate monoidal product $\otimes$
is used to combine the arguments together.  We argue that thinking in
terms of function application to design models of meaning is much more
intuitive than using tensor contraction, especially to linguists not
trained in quantum physics.

All the models proposed by~\cite{polajnar2014reducing}, including
\textbf{2Mat}, can be recast as DisCoCat models in
$L \mathbf{Set}_\times$, the free autonomous category on
$\mathbf{Set}_\times$, the category of sets and functions with the
cartesian product. In this setting, there are no tensors to contract:
meaning composition can only be thought of in terms of function
application. In fact, the models of meaning which can be formulated in
$L \mathbf{Set}_\times$ are precisely all the functional models directed
by the grammar.

\section{Further evidence against linearity}

Beyond the requirement to use tensors to pair up the arguments of any
morphism with multiple inputs, $\mathbf{Vect}_\otimes$ also requires that
morphisms to be linear. This requirement has not been challenged much
in the community either. This is perhaps because linearity assumptions
are pervasive in many fields of science; non-linear models can often
be reduced to linear ones via some transformation. In the particular
context of $\mathbf{Vect}_\otimes$ however, there is little room for such
adaptations. In this section, we show why this might be a problem in
its own right.

In linear algebra, an affine map is a map $f : x \mapsto Ax + b$,
where $A$ is a linear map and $b$ is a constant vector. Affine maps
are so similar to linear maps that the two notions are often
conflated: $Ax + b$ is coloquially called a linear combination of the
elements of $x$. In fact, any affine map $f : X \rightarrow Y$ can be
seen as a linear map $f' : X + \mathbf{1} \rightarrow Y$, where
$\mathbf{1}$ is the one-dimensional vector space for the same field:
$f(x) = f'(x, 1)$. With this observation, it is possible to adapt the
matrix calculus of linear maps to work for affine maps.

However, this trick can sadly not be used to generalize
$\mathbf{Vect}_\otimes$ to affine maps. The product $(A + \mathbf{1}) \otimes
(B + \mathbf{1})$ is not equal to $(A \otimes B) + \mathbf{1}$ for
non-null spaces and there does not seem to be a natural generalization
of bilinearity to allow for constants. Affine maps and tensor products
just do not mix up well. This shows how stringent the requirement to
use $\mathbf{Vect}_\otimes$ is: even the seemingly innocuous addition of a
constant breaks the machinery down.

This vow to strict linearity is of course problematic in a context
where neural networks have become the machine learning models of
choice in natural language processing, as these models crucially rely
on nonlinearities to function. We will review the possible
interactions between DisCoCat and neural networks in the next section,
but let us first give linearity the benefit of the doubt: it could be
that non-linearities are needed to learn word vectors, but the
resulting vectors could then be composed linearily when deriving the
representation of larger text
units~\cite{milajevs2014evaluating,sadrzadeh2017quantization}.

One of the seminal results that popularized neural word embeddings in
linguistics was that of \cite{mikolov2013efficient}.  After learning
word embeddings from text with a simple model that is trained to
guess a missing word in a fixed-length textual context, they observe
that the learned vectors satisfy some promising semantic
properties. The classical example of such a property is the fact that
$\vec{King} - \vec{Man} + \vec{Woman}$ gives a vector whose nearest
neighbour is $\vec{Queen}$. This sort of relation between vectors,
which arises from the training process without being enforced directly
by the learning objective, invites to model predicates as additive
functions.

This insight proved successful in a related problem, that of learning
embeddings for entities in a knowledge graph. A knowledge graph is
essentially a directed multigraph where edges are annotated with
properties.  These edges are seen as \emph{triples} formed by their
source, predicate and target, each encoding a fact about the world,
such as the triple (\texttt{Cantaloupe\_Island\_(song)},
\texttt{composer}, \texttt{Herbie\_Hancock}) for instance.  Graph
embeddings associate to each entity in the knowledge graph a vector in
a finite-dimensional vector space, computed solely from the knowledge
graph itself. Such embeddings are then useful to tackle various
information retrieval tasks, such as completing the knowledge graph by
adding missing triples. One of the popular approaches to compute such
embeddings is the \textbf{TransE} model~\cite{bordes2013translating},
which consists in optimizing the learning objective that $v(o)- v(s)
\simeq v(p)$ for each triple $(s,p,o)$ (subject - predicate - object).

Whether word vectors are learned from text or from a knowledge graph,
it is therefore tempting to model predicates in a distributional
compositional model by such additive functions. For instance, the
representation of the \emph{royal} adjective could be a function
adding $\vec{Queen} - \vec{Woman}$ to its argument. Sadly, this is again
not a linear function but an affine one: it cannot be implemented
in $\mathbf{Vec}_\otimes$. Again, it is a perfectly valid semantic
representation in $L \mathbf{Set}_\times$.

\section{Recasting convolutional models in DisCoCat}

Neural networks are popular to learn word vectors, but the dominant
architectures to learn these word vectors ignore grammar entirely: the
networks are simply trained to predict words in a text from their
surrounding context, regardless of their grammatical
function~\cite{mikolov2013efficient}. Some of these models, such as
the ones designed to translate text between languages~\cite{sutskever2014sequence},
make use of vector representations for larger units of texts such as
sentences: however, these vectors are rarely designed to encode the
full meaning of the textual unit, as the translation to the target
language relies on \emph{attention mechanisms} which rely on the
individual vectors as well as the combined representation to produce
the desired output~\cite{bahdanau2014neural}. As such, they do not fully address
the problem of combining distributional representations.

However, neural networks have also been applied in syntax-aware
approaches which construct sentence representations from word
representations. In~\cite{socher2013recursive}, a small neural network
is applied recursively along the syntax tree of the sentence to derive
a vector representation of each node from that of its
children. \cite{lewis2019compositionality} proposed to recast this
approach in the DisCoCat framework, which would give a similar model
driven from a pregroup-induced representation of the grammar
instead. But doing so required them to exchange the neural network for
a tensor in order to formulate the model in
$\mathbf{Vect}_\otimes$. Choosing to formulate it in
$L \mathbf{Set}_\times$ instead lets us keep the original, non-linear
convolutional unit. In effect, this just amounts to swapping the context-free
grammar used by~\cite{socher2013recursive} for a type-driven grammar such
as a pregroup grammar. In fact, a similar proposal was formulated
by \cite{krishnamurthy2013vector} earlier, using Combinatory Categorial Grammar
instead of pregroup grammar. This proposal also falls into the scope of DisCoCat,
when implemented in $L \mathbf{Set}_\times$.

\section{Conclusion}

We have shown that contrary to common belief in the DisCoCat community,
a monoidal category is all that is needed to define a compositional model
of meaning. In the case of distributional semantics, this broadens the range
of available models to include non-linear maps and cartesian products.
We hope this will encourage the community to experiment with alternate models,
which should be more amenable to competing with the state of the art in natural
language processing.

\section{Acknowledgements}

The author wishes to thank Amar Hadzihasanovic, Andrew Pitts, Anne
Preller, Bob Coecke, Dan Marsden, David Reutter, Jamie Vicary, Konstantinos
Meichanetzidis, Paul-André Melliès and Samuel Mimram for their useful feedback on
this work, part of which was conducted while at École normale
supérieure, Paris. The author is supported by an EPSRC Studentship.

\bibliographystyle{eptcs}
\bibliography{zotero}

\appendix








\section{Constructing autonomous categories} \label{sec:construction}

\subsection{Idea of the construction}

Let $\mathcal{C}$ be a strict monoidal category.  Our goal is to embed
$\mathcal{C}$ in a ``larger'' category, $L(\cC)$, which will be
autonomous. The embedding has to be functorial, so that the original
composition operations are retained.

The $\epsilon$ and $\eta$ maps will be purely formal,
which means that they will have no interpretation in the original
category.  Our approach to define them consists in taking diagrams
seriously: the arrows of our autonomous category will be diagrams. To do
so, we adapt the definitions of~\cite{joyal1988planar}, who defined the
diagrams for autonomous categories and proved their soundness and
completeness.  They assume for simplicity that the links in the diagrams
are piecewise linear and we follow their choice.

Our construction is similar to that of~\cite{preller2007free} who
defined the autonomous category freely generated by a category $\cC$,
not assumed to be monoidal.  Taking into account the monoidal
structure of $\cC$ construction is important for the linguistic
applications which motivate our work, as we need some
compatibility between the initial monoidal structure in $\cC$ and the
monoidal structure of the larger category $L(\cC)$.
 
\subsection{Graphs}

In this section, we summarize the definitions
from~\cite{joyal1988planar} needed to define our category $L(\cC)$. Our
goal is not to give the most general definition of topological graphs,
but only to define precisely the objects that we will manipulate.

A \textbf{graph} $\Gamma$ is a compact subset of $\mathbb{R}^2$ together
with a subset $\Gamma_0 \subseteq \Gamma$ such that
\begin{enumerate}[(i)]
  \item $\Gamma_0$ is discrete and finite, its elements are called
    \textbf{nodes};
  \item $\Gamma - \Gamma_0$ has a finite number of connected components
    called \textbf{1-cells} and each of them is homeomorphic to an open
    interval.
\end{enumerate}
       
We denote by $[p; q]$ the segment connecting two points $p, q \in
\mathbb{R}^2$. For all tuples of points $t = (p_0, \dots, p_n)$ we
define $[p_0, \dots, p_n] = [p_0,p_1] \cup [p_1,p_2] \cup \dots \cup
[p_{n-1},p_n]$.  The latter is called a \textbf{piecewise linear
  segment}.  The tuple $t$ is called \textbf{reduced} when there is no
$0 < i < n$ such that $p_i \in [p_{i-1},p_{i+1}]$.  In this case, the
$p_i$ for $0 < i < n$ are called \textbf{singular points}.  A
parametrization of a reduced piecewise linear segment $\gamma : [0;1]
\to [p_0, \dots, p_n]$ such that $\gamma(0) = p_0$ and $\gamma(1) =
p_n$ is called a \textbf{piecewise linear curve}.  The
\textbf{initial} (respectively \textbf{terminal}) segment of such a
$\gamma$ is $[p_0, p_1]$ (respectively $[p_{n-1},p_n]$). We denote by
$\bar{\gamma}$ the reversed parametrization: $\bar{\gamma}(t) =
\gamma(1-t)$.

A \textbf{piecewise linear graph} $\Gamma$ is a graph where the closure
of any 1-cell is a piecewise linear segment, and such that no initial or
terminal segment is horizontal. The \textbf{edges} of such a graph are
the parametrizations of these closures, identified up to monotonous
reparametrization (hence a 1-cell gives rise to two edges, $\gamma$ and
$\bar{\gamma}$).  The set of edges is denoted by
$\Edges(\Gamma)$. In the rest of this paper, all graphs are assumed to
be piecewise linear.

Let $\gamma$ be an edge and $x < y \in [0;1]$ be preimages of
consecutive singular points of $\gamma([0;1])$. The segment
$[\gamma(x),\gamma(y)]$ is \textbf{directed top} (respectively
\textbf{bottom}) when the second coordinate of $\gamma(y)$ is greater
(respectively smaller) than that of $\gamma(x)$.  The last requirement
of the definition of a piecewise linear graph implies that initial and
terminal segments of edges are either directed top or bottom. This
allows us to define the \textbf{inputs} of a node $x$ as the set of
edges $\gamma$ such that $\gamma(1) = x$ and the terminal segment of
$\gamma$ is directed bottom. Similarly, the \textbf{outputs} of $x$ are
the edges $\gamma$ such that $\gamma(0) = x$ and the initial segment of
$\gamma$ is directed bottom.

A graph $\Gamma$ is \textbf{between slices} $a$ and $b$, where $a < b$ are reals,
when $\Gamma \subset \mathbb{R} \times [a; b]$, and such that every node
in $\mathbb{R} \times \{a\}$ (respectively $b$) has one input and no
output (respectively one output and no input). These nodes included in
$\mathbb{R} \times \{a,b\}$ are called \textbf{outer nodes} and the
others are \textbf{inner nodes}. The set of inner nodes of a graph $\Gamma$ is
denoted by $\Nodes(\Gamma)$. The reason for this notation is that
the outer nodes will not represent morphisms but simply ``gates'', i.e.
inputs and outputs of the diagram. The outer nodes included in
$\mathbb{R} \times \{a\}$ are called \textbf{lower outer nodes} and the
other outer nodes are called \textbf{upper outer nodes}.  A
\textbf{regular slice} is a $c \in [a, b]$ such that $\Gamma_0 \cap
\mathbb{R} \times \{ c \} = \emptyset$. A \textbf{unit graph} is a
graph included in $[0,1]^2$ and between slices $0$ and $1$.

Finally, we need to define the turning number $\rho(\gamma)$ of an edge
$\gamma$.  Informally, this is the number of half-turns of the edge in
the direct orientation, minus the number of half-turns in the indirect
orientation. We invite the interested reader to
consult~\cite{joyal1988planar} for a rigorous definition. The following
examples should be enough to grasp the idea:
\begin{figure}[H]
  \centering
  \begin{tikzpicture}
    \draw[-latex] (0.5,0) -- (1.25,-1) -- (2,0);

    \draw[-latex] (2.5,0) -- (3,-1) -- (3.5,-1) -- (4,0);
    
    \draw[-latex] (4.5,-1) -- (4.7,-0.5) -- (4.9,-0.5) -- (5.1,-0.3) -- (5.5,0) -- (5.6,-0.6) -- (5.7,-0.3) -- (6,-1);

    \draw[-latex] (6.5,-0.5) -- (7.25,-1) -- (8,-0.5) -- (7.5,0) -- (7,-0.5);

    \draw[-latex] (8.5,0) -- (8.75,-0.4) -- (8.9,-0.2) -- (9,-0.5) -- (9.25,-0.3) -- (9.5,-1);

    \node at (-0.3,-0.75) {$\gamma = $};
    \node at (-0.5,-1.5) {$\rho(\gamma) =$};
    \node at (1.25,-1.5) {$1$};
    \node at (3.25,-1.5) {$1$};
    \node at (5.25,-1.5) {$-1$};
    \node at (7.25,-1.5) {$2$};
    \node at (9,-1.5) {$0$};
    \end{tikzpicture}
\end{figure}

\begin{defi} \label{def:yankable}
   A \textbf{yankable graph} is a graph between slices $a$ and $b$ such
   that for every edge $\gamma$ between two inner nodes, $\rho(\gamma) =
   0$.
\end{defi}

\noindent The reason for this additional requirement $\rho(\gamma) = 0$
is that we will attribute a morphism of $\cC$ to each node in
Section~\ref{sec:valued}.  Informally, as the domain and the codomain of
such morphisms cannot contain \emph{adjoints}, it is necessary that the
links between them can be \emph{yanked} to a straight line.

Here are a few examples of yankable and not yankable graphs. The edges
that make the graphs not yankable are drawn with dotted lines.
\begin{figure}[H]
   \centering
   \begin{tikzpicture}[every node/.style={node distance=0.3cm},scale=0.6]
     \node at (-1,0) {$a$};
     \node at (-1,4) {$b$};

      \draw[very thin] (0,0) rectangle (4,4);

      \tikzstyle{point}=[circle,draw,fill,inner sep=1pt];
      \node at (1,4) (gate1) {};
      \node at (3,4) (gate2) {};
      \node[point] at (1,2) (f) {};
      \node[point] at (3,2) (g) {};
      \node at (2,1) (id) {};
      \node at (2,0) (gate3) {};

      \draw[thick] (gate1.center) -- (f);
      \draw[thick,dotted] (f) -- (id.center) -- (g);

      \begin{scope}[xshift=5cm]
      \draw[very thin] (0,0) rectangle (4,4);

      \tikzstyle{point}=[circle,draw,fill,inner sep=1pt];
      \node[point] at (1,2) (f) {};
      \node[point] at (3,2) (g) {};
      \node[point] at (2,2) (h) {};
      \draw[thick] (0.5,4) -- (f);
      \draw[thick,dotted] (f) -- (1,1) -- (h) -- (3,3) -- (g);
      \draw[thick] (g.center) -- (3.5,0);

      \end{scope}
      \begin{scope}[xshift=10cm]
      \draw[very thin] (0,0) rectangle (4,4);

      \tikzstyle{point}=[circle,draw,fill,inner sep=1pt];
      \node[point] at (1,2) (f) {};
      \node[point] at (3,2) (g) {};
      \draw[thick] (0.5,4) -- (f.center) -- (1,1) -- (3,3) -- (g.center) -- (3.5,0);

      \end{scope}
      \begin{scope}[xshift=15cm]
      \draw[very thin] (0,0) rectangle (4,4);

      \tikzstyle{point}=[circle,draw,fill,inner sep=1pt];

      \node[point] at (2,1.5) (f) {};
      \draw[thick] (1,0) -- (f.center) -- (3,0);
      \draw[thick] (0.5,4) -- (1.5,3) -- (2.5,3) -- (3.5,4);

      \end{scope}
   \end{tikzpicture}
\end{figure}

A \textbf{deformation of graphs} $\Gamma$ to $\Gamma'$ is a regular
deformation of polarised graphs between $\Gamma$ and $\Gamma'$, as
defined in~\cite{joyal1988planar}. As it preserves the turning number
of edges, it preserves yankable graphs.

\subsection{Occurrences and replacement} \label{sec:replacement}

We define what an occurrence of a graph $G_1$ in a graph $\Gamma$ is,
and what the substitution of $G^1$ by $G^2$ in $\Gamma$ is. This will be
useful to define an equivalence relation on graphs, which will be
required to define the autonomous category $L(\cC)$ properly.

This notion is not needed to obtain the soundness and correctness
results of~\cite{joyal1988planar}, so one could wonder why we introduce
it while dealing with the same objects. The reason is that the
autonomous category we are constructing cannot be completely free, as we
have to retain the equalities holding in the original monoidal
category. This will enable us to define a functorial embedding in
Section~\ref{sec:func-embed}.

Let $t = (a,b,c,d)$ with $a < b$ and $c < d$ be reals. We define an
homeomorphism $\phi_t : [0, 1]^2 \to [a, b] \times [c, d]$ by
$\phi_t(x,y) = (a + x(b-a),c + y(d-c))$.  Let $G^1$ and $G^2$ be unit
graphs, such that the outer nodes of $G^1$ and $G^2$ are the same.  An
\textbf{occurrence} of $G^1$ in a graph $\Gamma \subset \mathbb{R}^2$
is a quadruplet of reals $t = (a,b,c,d)$, $a < b$ and $c < d$, such
that $\phi_t(G^1) = \Gamma \cap ([a,b] \times [c,d])$, and such that
no node of $\Gamma$ is included in the boundary of $[a,b] \times
[c,d]$.  We define $\Gamma [ G^1 \coloneqq G^2 ]_t = (\Gamma -
\phi_t(G^1)) \cup \phi_t(G^2)$, with nodes $(\Gamma_0 - \phi_t(G^1_0))
\cup \phi_t(G^2_0)$.

\begin{figure}[H]
  \centering
  \begin{tikzpicture}
     \tikzstyle{point}=[circle,draw,fill,inner sep=1pt];

     \foreach \xs/\ys/\sca/\lbl in {0cm/0.5cm/0.25/g1-1,3cm/0.25cm/0.18/g1-2} {
      \begin{scope}[xshift=\xs,yshift=\ys,scale=\sca]
      \draw[very thin] (0,0) rectangle (4,4);
      \node at (0,0) (corner1-\lbl) {};
      \node at (4,4) (corner2-\lbl) {};
      \node at (2,0) (gate3-\lbl) {};
      \node at (2,4) (gate1-\lbl) {};
      \node[point] at (2,3) (f) {};
      \node[point] at (2,1) (g) {};
  
      \draw[thick] (gate1-\lbl.center) -- (f) -- (g) -- (gate3-\lbl.center);

      \end{scope}
      }
      \node at (-0.75,1) {$G^1 =$};

    \foreach \xs/\ys/\sca/\lbl in {6cm/0.5cm/0.25/g2-1,9cm/0.25cm/0.18/g2-2} {
      \begin{scope}[xshift=\xs,yshift=\ys,scale=\sca]
      \draw[very thin] (0,0) rectangle (4,4);
      \node at (0,0) (corner1-\lbl) {};
      \node at (4,4) (corner2-\lbl) {};
      \node at (2,0) (gate3-\lbl) {};
      \node at (2,4) (gate1-\lbl) {};

      \node at (2,4) (gate1) {};
      \node[point] at (2,2) (f) {};
      \node at (2,0) (gate3) {};

      \draw[thick] (gate1.center) -- (f) -- (gate3.center);

      \end{scope}
     }
       \node at (5.25,1) {$G^2 =$};

      \foreach \xs/\ys/\lbl in {2cm/0cm/g1,8cm/0cm/g2} {
        \begin{scope}[xshift=\xs,yshift=\ys]
      \draw[very thin] (0,0) rectangle (2,2);

      \draw[dotted] (corner1-\lbl-1.center) -- (corner1-\lbl-2.center);
      \draw[dotted] (corner2-\lbl-1.center) -- (corner2-\lbl-2.center);

      \node at (0,0) (south) {};
      \draw[thick] (gate3-\lbl-2.center) -- (gate3-\lbl-2.center |- south);
      \node[point,above left of=gate1-\lbl-2,node distance=0.7cm] (p1) {};
      \draw[thick] (gate1-\lbl-2.center) -- (p1) -- ++(-0.5,-0.5) -- (0.5,0);
      \draw[thick] (p1) -- (1,2);
      \end{scope}
      }

   \end{tikzpicture}
\end{figure}

One can check that $\Gamma[G_1 \coloneqq G_2]_t$ is a piecewise linear
graph.  However, it is not yankable in general. We will get this
guarantee with the notion of valued graphs introduced in the next section.

\subsection{Valued graphs} \label{sec:valued}

In this section, we add \emph{valuations} to the objects introduced in
the previous section. This consists in labelling the nodes and the edges
with objects and arrows from a category, in a consistent way.

\begin{defi} \label{def:valued}
  A \textbf{$\cC$-valued graph} is a yankable graph $\Gamma$ with
  functions
  \begin{align*}
    v_0 : \Nodes(\Gamma) \to \Mor(\cC) & & v_1 : \Edges(\Gamma) \to
    \Ob(\cC) \times \mathbb{Z}
  \end{align*}
  such that:
  \begin{enumerate}[(i)]
    \item $\forall \gamma \in \Edges(\Gamma), v_1(\bar{\gamma}) =
      (A,n+\rho(\gamma))$ where $(A,n) = v_1(\gamma)$
    \item $\forall x \in \Nodes(\Gamma), \forall \gamma \in
      \Edges(\Gamma)$
        \begin{itemize}
            \item if $\gamma(0) = x$ then $v_1(\gamma) = (A,0)$ for some
              $A \in \Ob(\cC)$
            \item if $\gamma(1) = x$ then $v_1(\bar{\gamma}) = (A,0)$
              for some $A \in \Ob(\cC)$
        \end{itemize}
            \item $\forall x \in \Nodes(\Gamma), v_0(x) :
              v_1(\bar{\gamma_1}) \otimes \dots \otimes
              v_1(\bar{\gamma_p}) \to v_1(\delta_1) \otimes \dots
              \otimes v_1(\delta_q)$

                  where $\gamma_1, \dots, \gamma_p$ and $\delta_1,
                  \dots, \delta_q$ are the ordered lists of the input
                  and output edges of $x$, and where $v_1^e(\gamma) =
                  (A,0)$ is identified with $A$ for simplicity.
  \end{enumerate}
\end{defi}

Informally, the value $v_1(\gamma)$ represents the domain of the edge
$\gamma$, and $v_1(\bar{\gamma})$ represents its codomain. The condition
(i) states the relation between the two.  Note that a valued graph is
always yankable, as the rotation number of an edge between two inner
nodes is $0$ because of the requirements (i) and (ii).

The valuation of an upper outer node $x$ is $v_1(\gamma)$ where
$\gamma$ is the only edge such that $\gamma(0) = x$.  Similarly, the
valuation of a lower outer node $x$ is $v_1(\bar{\gamma})$ where
$\gamma$ is the only edge such that $\gamma(1) = x$. In both cases, we
denote this valuation by $v_b(x)$.  The \textbf{domain} of a valued
graph is the tuple $(v_b(x_1), \dots, v_b(x_p))$ where
$x_i$ is the $i$-th upper outer node of the graph.
The \textbf{codomain} is defined similarly with the lower outer nodes.

We can define the replacement of the valued graphs $G^1$ by
$G^2$ in the valued graph $\Gamma$, when the domains and codomains
of $G^1$ and $G^2$ are the same and the valuations of the nodes
of $G^1$ and $\Gamma$ agree.

\begin{lemma}
  Let $\Gamma$ be a valued graph and $G^1$, $G^2$ be valued unit graphs.
  Suppose that $t$ is an
  occurrence of $G^1$ in $\Gamma$, that the valuations of the nodes of $\Gamma$ and
  $\phi_t(G^1)$ are identical and that the domains and codomains
  of $G^1$ and $G^2$ match. Then $\Gamma[G^1 \coloneqq G^2]_t$
  can be given a valuation, such that it agrees with the valuation of
  $\Gamma$ on the nodes and edges included in $\Gamma - \phi_t(G^1)$, and it
  agrees with the valuation of $\phi_t(G^2)$ on the inner nodes and edges
  included in $\phi_t(G^2)$.
\end{lemma}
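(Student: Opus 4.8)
The plan is to build the valuation $(v_0', v_1')$ on $\Gamma' \coloneqq \Gamma[G^1 \coloneqq G^2]_t$ piecewise, reading it off $\Gamma$ outside the box $[a,b]\times[c,d]$ and off $G^2$ inside, and then to check that the two prescriptions glue into a genuine $\cC$-valued graph along the boundary slices $y = c$ and $y = d$. Since $\phi_t(x,y) = (a + x(b-a),\, c + y(d-c))$ is an orientation-preserving affine homeomorphism, it transports the valuation of $G^2$ to its image without altering turning numbers; so on the inner nodes and edges lying strictly inside the box I set $v_0'(x) \coloneqq v_0^{G^2}(\phi_t^{-1}(x))$ and $v_1' \coloneqq v_1^{G^2} \circ \phi_t^{-1}$, while on the nodes and edges lying strictly outside I keep $v_0' \coloneqq v_0^{\Gamma}$ and $v_1' \coloneqq v_1^{\Gamma}$. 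By construction these agree with $\Gamma$ on $\Gamma - \phi_t(G^1)$ and with $\phi_t(G^2)$ on its interior, which is exactly the compatibility the statement demands.

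The only edges not yet covered are those meeting the boundary, and these I would handle next. Because $G^1$ and $G^2$ are unit graphs, the underlying space $\phi_t(G^1) = \Gamma \cap ([a,b]\times[c,d])$ touches the box only at the shared outer nodes (the gates) on $y=c$ and $y=d$, and no edge of $\Gamma$ crosses the vertical sides. Each crossing edge $\delta$ of $\Gamma$ therefore restricts, inside the box, to a gate edge of $G^1$, and replacing $G^1$ by $G^2$ splices the outer portion $\delta_{\mathrm{out}}$ to the corresponding gate edge of $G^2$, producing a single edge $\mu$ of $\Gamma'$ running from an outer end of $\Gamma$ (or an inner node outside the box) to an inner node of $G^2$. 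I value $\mu$ from its $G^2$-side inner end and propagate by condition~(i), and must then verify this is consistent with what $\Gamma$ prescribed on $\delta_{\mathrm{out}}$.

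The heart of the argument is this consistency check, and it rests on two facts. Since in a valued unit graph the boundary value $v_b(p)$ is determined, via condition~(i), by the object and the turning number of the unique gate edge at $p$, the hypothesis that the domains and codomains of $G^1$ and $G^2$ match forces their gate edges at each $p$ to carry the same object $A$ and the same turning number, $\rho(\gamma_{G^1}) = \rho(\gamma_{G^2})$. Combining this with additivity of the turning number along the splice gives $\rho(\mu) = \rho(\delta_{\mathrm{out}}) + \rho(\gamma_{G^2}) = \rho(\delta_{\mathrm{out}}) + \rho(\gamma_{G^1}) = \rho(\delta)$, so $\mu$ has exactly the object, turning number, and integer component that $\delta$ carried in $\Gamma$. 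Consequently every instance of conditions~(i)--(iii) at a node of $\Gamma'$ is either inherited unchanged from $\Gamma$ (at nodes outside the box, whose incident edges are unaffected in object and integer part) or inherited from $G^2$ (at nodes inside), with the matching objects at the gates making condition~(iii) typecheck across the seam.

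I expect the turning-number bookkeeping at the seam to be the main obstacle: one must make the additivity $\rho(\mu) = \rho(\delta)$ rigorous for piecewise linear edges passing through a gate, checking that at each gate the two spliced portions are consistently directed (both top or both bottom), so that no corner and hence no spurious half-turn is created or destroyed. Once this is pinned down, yankability of $\Gamma'$ is automatic, since the integer components along every edge between two inner nodes collapse to $0$ exactly as in Definition~\ref{def:valued}, and the remaining checks of~(i)--(iii) are the routine transport of the conditions already verified in $\Gamma$ and in $G^2$.
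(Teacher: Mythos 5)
Your overall strategy---define the valuation piecewise and verify condition (i) at the seam by additivity of the turning number---is the same as the paper's, but your treatment of the seam has a genuine gap. You assume that every boundary-crossing edge $\mu$ of $\Gamma[G^1 \coloneqq G^2]_t$ arises by splicing the outer portion $\delta_{\mathrm{out}}$ of a single edge $\delta$ of $\Gamma$ onto a single gate edge of $G^2$, and that the gate edges of $G^1$ and $G^2$ at a given outer node have equal turning numbers. Neither holds in general. The replacement may completely rewire the connectivity inside the box: the gate edge of $G^1$ at $p$ may run to an inner node while the gate edge of $G^2$ at $p$ runs to another gate $q$, in which case their turning numbers are $-n_p$ versus $n_q - n_p$ (writing $v_b(p) = (A, n_p)$), which differ whenever $n_q \neq 0$; matching domains and codomains constrain only the boundary valuations $v_b$, not the turning numbers of individual gate edges. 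Moreover an edge of the new graph may enter and leave the box several times, alternating between portions of $\Gamma$ and portions of $\phi_t(G^2)$, so there need be no single $\delta$ in $\Gamma$ from which $\mu$ can inherit its data, and the identity $\rho(\mu) = \rho(\delta)$ does not even parse.

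What must be verified is condition (i) itself for each edge $\mu$ of the new graph, and this is how the paper proceeds: it decomposes $\mu$ at all of its crossings $t_1 < \dots < t_{k-1}$ with the outer nodes of the box, observes that the turning number of each piece equals the difference of the boundary valuations $v_b$ at its endpoints (and these are the same for $G^1$ and $G^2$ by the hypothesis on domains and codomains), and telescopes to obtain $\rho(\mu) = v_1(\bar{\mu}) - v_1(\mu)$. This uses only the agreement of the $v_b$'s, never a turning-number comparison between corresponding gate edges, and it covers arbitrary rewiring and multiple crossings. Your remark about orienting the spliced portions consistently at each gate corresponds to the paper's preliminary deformation ensuring the outer nodes of $\phi_t(G^1)$ are not singular points of $\Gamma$; that part of your plan is sound, as is the piecewise definition itself.
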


\begin{proof}
  We admit that if $\gamma : [0,1] \to \Gamma$ is an edge and $t \in
  [0,1]$ is such that $\gamma(t)$ is not a singular point of
  $\gamma([0,1])$, then $\rho(\gamma) = \rho(\gamma_{| [0,t] }) +
  \rho(\gamma_{| [t,1]})$.

  Up to a regular transformation, we can assume that the outer nodes of
  $\phi_t(G^1)$ are not singular nodes in $\Gamma$. Then, for every edge $\gamma$ in
  $\Gamma$, there is a series of $t_0 < \dots < t_k$ such that $t_0 =
  0$, $t_k = 1$, and for $0 < i < k$, $\gamma(t_k)$ is an outer node of
  $\phi_t(G^1)$, and such that $\gamma((t_i, t_{i+1}))$ is either
  included in $\phi_t(G^1)$ or in $\Gamma - \phi_t(G^1)$.

  \begin{figure}[H]
  \centering
  \begin{tikzpicture}[every node/.style={node distance=0.7cm},scale=0.75]
    \draw (-0.5,0) rectangle (4,4);
    \node[circle,draw,fill,inner sep=2pt] at (0,6) (a) {};
    \node[circle,draw,fill,inner sep=2pt] at (3.5,1) (b) {};
    \draw (0.5,6.5) -- (a) -- (-0.5,6.5);
    \draw (2.5,-0.5) -- (b) -- (4,-0.5);

    \node at (1.5,2) (p1) {};
    \node at (2.5,5.5) (p2) {};
    \draw (a) -- (p1.center) -- (p2.center) -- (b);
    
    \coordinate (s1) at (0,4);
    \coordinate (s2) at (4,4);
    
    \node[circle,draw,inner sep=2.5pt] at (intersection of s1--s2 and a--p1) (t1) {};
    \node[circle,draw,inner sep=2.5pt] at (intersection of s1--s2 and p1--p2) (t2) {};
    \node[circle,draw,inner sep=2.5pt] at (intersection of s1--s2 and p2--b) (t3) {};

    \node[right of=a] {$\gamma(t_0)$};
    \node[left of=b] {$\gamma(t_4)$};
    \begin{scope}[every node/.style={node distance=0.6cm},xscale=4]
    \node[below left of=t1] {$\gamma(t_1)$};
    \node[above left of=t2] {$\gamma(t_2)$};
    \node[above right of=t3] {$\gamma(t_3)$};
    \end{scope}

    \node at (0.4,0.5) {$\phi_t(G_1)$};
    
  \end{tikzpicture}
\end{figure}
  \noindent Moreover, identifying
  temporarily $v_1$ and $v_b$ with their second projection, we have:
  \begin{align*}
  & \rho(\gamma_{|[t_0,t_1]}) = v_b(\gamma(t_1)) - v_1(\gamma)
    \\ \text{for }0 < i < k-1 \text{,\,\,}& \rho(\gamma_{|[t_i,
        t_{i+1}]}) = v_b(\gamma(t_{i+1})) - v_b(\gamma(t_i))\\ &
    \rho(\gamma_{|[t_{k-1},t_k]}) = v_1(\bar{\gamma}) -
    v_b(\gamma(t_{k-1}))
  \end{align*}

  Now for any edge $\gamma$ in $\Gamma[G^1 \coloneqq G^2]_t$, we have
  such a decomposition and $\rho(\gamma) = \sum_{i = 0}^{k-1}
  \rho(\gamma_{|[t_i, t_{i+1}]})$.  If $\gamma((0,t_1)) \subset \Gamma -
  \phi_t(G^2)$, we give $\gamma$ the valuation of the edge in $\Gamma$ starting
  on $\gamma(0)$ and whose image includes $\gamma((0,t_1))$.  If
  $\gamma((0,t_1)) \subset \phi_t(G^2)$, then similarly we give $\gamma$ the
  valuation from $G^2$. We can decompose
  the rotation number $\rho(\gamma)$ as follows:
  \begin{align*}
    \rho(\gamma) &= \sum_{i = 0}^{k-1} \rho(\gamma_{|[t_i, t_{i+1}]})
    \\ &= v_b(\gamma(t_1)) - v_1(\gamma) + \sum_{i = 1}^{k-2}
    \big(v_b(\gamma(t_{i+1})) - v_b(\gamma(t_i))\big) +
    v_1(\bar{\gamma}) - v_b(\gamma(t_{k-1})) \\ &= v_1(\bar{\gamma}) -
    v_1(\gamma)
  \end{align*}
  Hence the condition (i) of the definition is satisfied. The two other
  conditions are directly inherited from $\Gamma$ and $G^2$.
\end{proof}

\subsection{The category of valued graphs} \label{sec:category}

Our autonomous category $L(\cC)$ will be defined as the category of
$\cC$-valued graphs. But to do so, we need to define an equivalence
relation to account for some equalities of arrows from $\cC$.  This
consists in defining reduction rules based on replacement.

We will need to define some particular $\cC$-valued graphs.  Instead of
defining the graph and the valuations separately, we choose to draw
them, replacing the black points representing our nodes by boxes
containing the valuation of these nodes. The valuations of the edges are
dropped when they are clear from the context.

For all $f \in \cC(A_1 \otimes \dots \otimes A_n,B_1 \otimes \dots
\otimes B_p)$ and $g \in \cC(B_1 \otimes \dots \otimes B_p,C_1 \otimes
\dots \otimes C_q)$ we define the following graphs:
\begin{figure}[H]
   \centering
   \begin{tikzpicture}[every node/.style={node distance=0.3cm},scale=0.6]
      \draw[very thin] (0,0) rectangle (4,4);

      \node[draw,rectangle] at (2,3) (f) {$f$};
      \node[draw,rectangle] at (2,1) (g) {$g$};
      \node at (2,2) {$\dots$};
      \node at (2,0.25) {$\dots$};
      \node at (2,3.75) {$\dots$};

      \draw[thick] (1,4) -- (f) -- (1,2) -- (g) -- (1,0);
      \draw[thick] (3,4) -- (f) -- (3,2) -- (g) -- (3,0);

      \node at (-2,2) {$G_1(f,g) \coloneqq$};

      \node at (2,4.5) {$(A_1^0, \dots, A_n^0)$};
      \node at (2,-0.5) {$(C_1^0, \dots, C_q^0)$};
      
   \end{tikzpicture}
   \hspace{1cm}
   \begin{tikzpicture}[every node/.style={node distance=0.3cm},scale=0.6]
      \draw[very thin] (0,0) rectangle (4,4);

      \node[draw,rectangle] at (2,2) (f) {$g \circ f$};
      \node at (2,0.25) {$\dots$};
      \node at (2,3.75) {$\dots$};

      \draw[thick] (1,4) -- (f) -- (1,0);
      \draw[thick] (3,4) -- (f) -- (3,0);

      \node at (-2,2) {$G_1'(f,g) \coloneqq$};

      \node at (2,4.5) {$(A_1^0, \dots, A_n^0)$};
      \node at (2,-0.5) {$(C_1^0, \dots, C_q^0)$};
   \end{tikzpicture}
\end{figure}
\noindent For all $f \in \cC(A_1 \otimes \dots \otimes A_p, B_1 \otimes
\dots \otimes B_q)$ and $g \in \cC(C_1 \otimes \dots \otimes C_n, D_1
\otimes \dots \otimes D_m)$ we define the following graphs:
\begin{figure}[H]
   \centering
   \begin{tikzpicture}[every node/.style={node distance=0.3cm},yscale=0.6,xscale=1]
      \draw[very thin] (0,0) rectangle (4,4);

      \node[draw,rectangle] at (1,2) (f) {$f$};
      \node[draw,rectangle] at (3,2) (g) {$g$};
      
      \draw[thick] (0.5,4) -- (f) -- (0.5,0);
      \draw[thick] (1.5,4) -- (f) -- (1.5,0);
      \draw[thick] (2.5,4) -- (g) -- (2.5,0);
      \draw[thick] (3.5,4) -- (g) -- (3.5,0);

     \node at (1,0.25) {\footnotesize $\dots$};
      \node at (1,3.75) {\footnotesize $\dots$};
      \node at (3,0.25) {\footnotesize $\dots$};
      \node at (3,3.75) {\footnotesize $\dots$};

     \node at (2,4.5) {\footnotesize $(A_1^0, \dots, A_p^0, C_1^0, \dots, C_n^0)$};
     \node at (2,-0.5) {\footnotesize $(B_1^0, \dots, B_q^0, D_1^0, \dots, D_m^0)$};

      \node at (-1.2,2) {$G_2(f,g) \coloneqq$};
   \end{tikzpicture}
   \hspace{1cm}
   \begin{tikzpicture}[every node/.style={node distance=0.3cm},yscale=0.6,xscale=1]
      \draw[very thin] (0,0) rectangle (4,4);

      \node[draw,rectangle] at (2,2) (f) {$f \otimes g$};
      
      \draw[thick] (0.5,4) -- (f) -- (0.5,0);
      \draw[thick] (1.7,4) -- (f) -- (1.7,0);
      \draw[thick] (2.3,4) -- (f) -- (2.3,0);
      \draw[thick] (3.5,4) -- (f) -- (3.5,0);
      
      \node at (1.2,0.25) {\footnotesize $\dots$};
      \node at (1.2,3.75) {\footnotesize $\dots$};
      \node at (2.8,0.25) {\footnotesize $\dots$};
      \node at (2.8,3.75) {\footnotesize $\dots$};

     \node at (2,4.5) {\footnotesize $(A_1^0, \dots, A_p^0, C_1^0, \dots, C_n^0)$};
     \node at (2,-0.5) {\footnotesize $(B_1^0, \dots, B_q^0, D_1^0, \dots, D_m^0)$};

            \node at (-1.2,2) {$G_2'(f,g) \coloneqq$};
   \end{tikzpicture}
\end{figure}
\noindent For $A \in \cC$ we define the following graphs:
\begin{figure}[H]
   \centering
   \begin{tikzpicture}[every node/.style={node distance=0.3cm},scale=0.6]
      \draw[very thin] (0,0) rectangle (4,4);

      \node[draw,rectangle] at (2,2) (f) {$1_A$};

      \draw[thick] (2,4) -- (f) -- (2,0);

      \node at (-2,2) {$G_3(A) \coloneqq$};
 
      \node at (2,4.5) {$(A^0)$};
      \node at (2,-0.5) {$(A^0)$};
      
   \end{tikzpicture}
   \hspace{1cm}
   \begin{tikzpicture}[every node/.style={node distance=0.3cm},scale=0.6]
      \draw[very thin] (0,0) rectangle (4,4);

      \draw[thick] (2,4)  -- (2,0);

      \node at (-2,2) {$G_3'(A) \coloneqq$};
      \node at (2,4.5) {$(A^0)$};
      \node at (2,-0.5) {$(A^0)$};
   \end{tikzpicture}
\end{figure} The generalized version of this last
replacement pair (with multiple inputs and outputs) will be a
consequence of the three replacement pairs, as it can be obtained as the
$n$-fold product of identities.

\begin{defi}
   Let $A$ and $B$ be two $\cC$-valued yankable graphs. We say that $A$
   reduces to $B$ (denoted by $A \leq B$) when $A$ contains the
   sub-graph $G = G_i(f,g)$ for some $f$ and $g$ arrows of $\cC$ (or $G
   = G_3(X)$ for some $X \in \Ob(\cC)$), and $B$ can be obtained by
   replacing this occurrence of $G$ by $G' = G'_i(f,g)$ (or $G'_3(X)$,
   respectively).
\end{defi}

Another useful relation is $\Gamma \leadsto \Gamma'$, which holds when
there is a deformation of graphs from $\Gamma$ to $\Gamma'$.  Finally we
define the relation $\sim_\cC$ as the reflexive, symmetric and
transitive closure of $\leq \cup \leadsto$.

\begin{defi}
  The category $L(\cC)$ has:
  \begin{itemize}
    \item objects of the form $(A_1^{n_1}, \dots , A_p^{n_p})$ where
      $A_i \in \Ob(\cC)$ and $n_i \in \mathbb{Z}$
    \item morphisms $f : A \to B$, where $f$ is an equivalence class
      under $\sim_\cC$ of $\cC$-valued unit graphs with inputs $A$ and
      outputs $B$.
  \end{itemize}
\end{defi}

\noindent The composition and tensor product in $L(\cC)$ are defined
as in~\cite{joyal1991geometry}.  Let $f \in L(\cC)(A,B)$ and $g \in
L(\cC)(C,D)$ be $\cC$-valued unit graphs.  The morphism $f \otimes g
\in L(\cC)(A \otimes_{L(\cC)} C, B \otimes_{L(\cC)} D)$ is defined by
concatenating horizontally shrunken versions of $f$ and $g$:
$$f \otimes g = \phi_{(0,\frac{1}{2},0,1)}(f) \cup
\phi_{(\frac{1}{2},1,0,1)}(g).$$

Let $f \in L(\cC)(A,B)$ and $g \in L(\cC)(B,C)$ be $\cC$-valued unit
graphs, with $B = (B_1^{n_1}, \dots, B_p^{n_p})$.  We cannot simply
stack the diagrams vertically to define the sequential composite $g
\circ f$, because the horizontal positions $(u_1, \dots, u_p)$ of the
lower outer gates of $f$ might not match with the positions $(v_1,
\dots, v_p)$ of the upper outer gates of $g$. Hence we add identity
links between them:
$$f \circ g = \phi_{(0,1,0,\frac{1}{3})}(f) \cup
\phi_{(0,1,\frac{1}{3},\frac{2}{3})}(L) \cup
\phi_{(0,1,\frac{2}{3},1)}(g)$$ where $L$ is the diagram with identity
links between points $(u_i,1)$ and $(v_i,0)$ where $u_i$ (respectively
$v_i$) is the abscissa of the $i$-th lower outer gate of $g$
(respectively upper outer gate of $f$).
\begin{figure}[H]
  \centering
\begin{tikzpicture}[scale=0.4]
  \draw (0,0) rectangle (4,4);
  \node at (2,2) {$f$};

  \node at (5,2) {$\circ$};
  
  \draw (6,0) rectangle (10,4);
  \node at (8,2) {$g$};

  \node at (11,2) {$=$};

  \draw (12,0) rectangle (16,4);
  \draw (12,1.5) -- (16,1.5);
  \draw (12,2.5) -- (16,2.5);
  \draw (12.7,1.5) -- (13.1,2.5);
  \draw (13,1.5) -- (13.5,2.5);
  \draw (14.7,1.5) -- (14,2.5);
  \draw (15.5,1.5) -- (15.3,2.5);
  \node at (14,0.75) {$f$};
  \node at (14,3.25) {$g$};

  \begin{scope}[xshift=19cm]
  \draw (0,0) rectangle (4,4);
  \node at (2,2) {$f$};

  \node at (5,2) {$\otimes$};
  
  \draw (6,0) rectangle (10,4);
  \node at (8,2) {$g$};

  \node at (11,2) {$=$};

  \draw (12,0) rectangle (16,4);
  \draw (14,0) -- (14,4);
  \node at (13,2) {$f$};
  \node at (15,2) {$g$};

  \end{scope}
\end{tikzpicture}
\end{figure}

\noindent This category is well defined because the composition is
compatible with the relation $\sim_\cC$. In other words, the equivalence
class under $\sim_\cC$ of the vertical stacking of two graphs does not
depend on the choice of the two representatives.  As it is also the case
for the horizontal concatenation, $L(\cC)$ is also strict monoidal.

The product on objects is the concatenation of lists, the unit object
is the empty list denoted by $()$, and the tensor product on arrows is
the horizontal juxtaposition.  Formally, this product is different
from the tensor product of $\cC$. But the rewrite rules defined above
provide a bridge between the two.  If there are objects $A, B, C, D
\in \cC$ such that $A \otimes B = C \otimes D$, then the objects
$(A^0, B^0)$ and $(C^0, D^0)$ are isomorphic in $L(\cC)$, with the
following isomorphism: \begin{figure}[H]
  \centering
   \begin{tikzpicture}[every node/.style={node distance=0.3cm},scale=0.6]
      \node at (-2,2) {$\alpha_{A,B,C,D} =$};

      \draw[very thin] (0,0) rectangle (4,4);

      \node at (1.35,4) (gate1) {};
      \node at (2.65,4) (gate2) {};
      \node[draw,rectangle] at (2,2) (id) {$1_{A\otimes B}$};

      \draw[thick] (gate1.center) -- (id);
      \draw[thick] (gate2.center) -- (id);
      \draw[thick] (id) -- (1.35,0);
      \draw[thick] (id) -- (2.65,0);

      \node at (2,4.5) {$(A^{0}, B^{0})$};
      \node at (2,-0.5) {$(C^{0}, D^{0})$};

      \node at (9,2) {$\alpha_{A,B,C,D}^{-1} = \alpha_{C,D,A,B}$};

      \begin{scope}[xshift=-4cm,yshift=-4cm,yscale=0.5]
        \draw[very thin] (0,-4) rectangle (4,4);

      \node at (1.35,4) (gate1) {};
      \node at (2.65,4) (gate2) {};
      \node[draw,rectangle] at (2,2) (id) {$1_{A\otimes B}$};

      \draw[thick] (gate1.center) -- (id);
      \draw[thick] (gate2.center) -- (id);
      \draw[thick] (id) -- (1.35,0);
      \draw[thick] (id) -- (2.65,0);
      
      \begin{scope}[yshift=-4cm]
      \node at (1.35,4) (gate1) {};
      \node at (2.65,4) (gate2) {};
      \node[draw,rectangle] at (2,2) (id) {$1_{C\otimes D}$};

      \draw[thick] (gate1.center) -- (id);
      \draw[thick] (gate2.center) -- (id);
      \draw[thick] (id) -- (1.35,0);
      \draw[thick] (id) -- (2.65,0);
      \end{scope}

      \node at (2,4.8) {$(A^{0}, B^{0})$};
      \node at (2,-4.9) {$(A^{0}, B^{0})$};

      \node at (4.75,0) {$=$};
      \begin{scope}[xshift=5.5cm]
        \draw[very thin] (0,-4) rectangle (4,4);

      \node at (1.35,4) (gate1) {};
      \node at (2.65,4) (gate2) {};
      \node[draw,rectangle] at (2,0) (id) {$1_{A\otimes B}$};

      \draw[thick] (gate1.center) -- (id);
      \draw[thick] (gate2.center) -- (id);
      \draw[thick] (id) -- (1.35,-4);
      \draw[thick] (id) -- (2.65,-4);

      \node at (2,4.8) {$(A^{0}, B^{0})$};
      \node at (2,-4.9) {$(A^{0}, B^{0})$};

      \node at (4.75,0) {$=$};
     \begin{scope}[xshift=5.5cm]
        \draw[very thin] (0,-4) rectangle (4,4);

      \node at (1.35,4) (gate1) {};
      \node at (2.65,4) (gate2) {};
      \node[draw,rectangle] at (1.35,0) (id1) {$1_A$};
      \node[draw,rectangle] at (2.65,0) (id) {$1_B$};

      \draw[thick] (gate1.center) -- (id1);
      \draw[thick] (gate2.center) -- (id);
      \draw[thick] (id1) -- (1.35,-4);
      \draw[thick] (id) -- (2.65,-4);

      \node at (2,4.8) {$(A^{0}, B^{0})$};
      \node at (2,-4.9) {$(A^{0}, B^{0})$};

      \node at (4.75,0) {$=$};
    \begin{scope}[xshift=5.5cm]
        \draw[very thin] (0,-4) rectangle (4,4);

      \node at (1.35,4) (gate1) {};
      \node at (2.65,4) (gate2) {};

      \draw[thick] (gate1.center) -- (1.35,-4);
      \draw[thick] (gate2.center) -- (2.65,-4);

      \node at (2,4.8) {$(A^{0}, B^{0})$};
      \node at (2,-4.9) {$(A^{0}, B^{0})$};

      \end{scope}

      \end{scope}

      \end{scope}

      \end{scope}

   \end{tikzpicture}
\end{figure}

\begin{prop}
   For any monoidal category $\cC$, $L(\cC)$ is autonomous.
\end{prop}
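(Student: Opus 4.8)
The plan is to show that every object $X = (A_1^{n_1}, \dots, A_p^{n_p})$ of $L(\cC)$ admits both a left and a right adjoint, each again an object of $L(\cC)$ of the same shape. Since the left-adjoint construction is the mirror image of the right-adjoint one (replacing each shift $+1$ by $-1$ and reflecting diagrams top-to-bottom), I would establish the right adjoint in detail and deduce the left adjoint by this symmetry. The guiding idea is that the unit and counit are \emph{purely formal}: they carry no node of $\cC$ but are built from bending edges (``cups'' and ``caps'') whose turning numbers record the adjoint shifts, exactly as in the graphical calculus of Section~\ref{sec:diag}.

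Concretely, I would take the right adjoint to be the reversed, shifted list $X^r \coloneqq (A_p^{n_p + 1}, \dots, A_1^{n_1 + 1})$. The counit $\epsilon_X : X \otimes X^r \to ()$ is the $\cC$-valued unit graph with no inner node consisting of $p$ nested cups, the $i$-th joining the gate carrying $A_i^{n_i}$ in $X$ to the mirror gate carrying $A_i^{n_i+1}$ in $X^r$; dually, $\eta_X : () \to X^r \otimes X$ is a nest of $p$ caps. One first checks these are legitimate $\cC$-valued graphs in the sense of Definition~\ref{def:valued}: since there are no inner nodes, conditions (ii) and (iii) are vacuous, while condition (i) holds because each cup and cap, oriented from the $X$-side to the $X^r$-side, has turning number $\pm 1$, precisely the shift between the valuations $(A_i, n_i)$ and $(A_i, n_i + 1)$ read off at its two endpoints. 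Yankability is automatic, as no edge runs between two inner nodes.

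It then remains to verify the yanking equations~(\ref{eqn:yanking}) in $L(\cC)$: that the zigzag composites $(1_{X^r} \otimes \epsilon_X) \circ (\eta_X \otimes 1_{X^r})$ and $(\epsilon_X \otimes 1_X) \circ (1_X \otimes \eta_X)$ are $\sim_\cC$-equivalent to the identity graphs on $X^r$ and $X$ respectively. Here I expect that no reduction rule $\leq$ is needed: both equalities already hold through the deformation relation $\leadsto$, since each nested zigzag can be regularly deformed to a family of straight vertical edges. This is the topological content of the yanking pictures, and it is legitimate because a deformation of polarised graphs preserves turning numbers---so a cup of turning $+1$ followed by a cap of turning $-1$ straightens to an edge of turning $0$, matching the identity---and hence preserves both yankability and, via condition (i), the valuations along each strand.

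The main obstacle is to make this straightening rigorous as a \emph{valued} deformation, and two points require care. First, composition in $L(\cC)$ inserts the intermediate identity links $L$ to reconcile the horizontal positions of the gates; one must check that these linear segments deform away without creating spurious turning and without disturbing the valuations. Second, one must confirm that the $p$ nested zigzags straighten by a \emph{single} planar deformation: because the strands are nested rather than crossing, the isotopy straightening one strand extends to all of them, and the valuation bookkeeping---tracking $(A_i, n_i)$ along each strand via the additive decomposition of $\rho$ used in the preceding lemma---stays consistent throughout. Once this is settled for a single object, functoriality of $\otimes$ together with the left/right symmetry yields adjoints for every object, so $L(\cC)$ is autonomous.
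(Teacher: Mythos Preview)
Your proposal is correct and follows essentially the same approach as the paper: both exhibit the adjoint of $(A_1^{n_1},\dots,A_p^{n_p})$ as the reversed, shifted list, take $\epsilon$ and $\eta$ to be the inner-node-free nested cups and caps, note that the valued-graph conditions are vacuous or forced by the turning numbers, and then appeal to the yanking equalities. You treat the right adjoint in detail where the paper treats the left, and you spell out what the paper leaves as a one-line ``They satisfy the yanking equalities''---namely that the zigzag straightens by a planar deformation $\leadsto$ alone, with no use of the reduction rules $\leq$; this extra care is welcome but does not constitute a different route.
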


\begin{proof}
  For any object $(A_1^{n_1}, \dots, A_p^{n_p})$, let us show that it
  has a left adjoint $(A_p^{n_p-1}, \dots, A_1^{n_p-1})$. A similar
  argument shows that it has a right adjoint $(A_p^{n_p+1}, \dots,
  A_1^{n_p+1})$.

  \noindent We define the following morphisms:
  \begin{figure}[H]
   \centering
   \begin{tikzpicture}[every node/.style={node distance=0.3cm,scale=0.96},scale=0.6]
      \draw[very thin] (0,0) rectangle (4,4);

      \tikzstyle{point}=[circle,draw,fill,inner sep=1pt];
      \node at (0.25,4) (gate1) {};
      \node at (1.5,4) (gate2) {};
      \node at (2.5,4) (gate3) {};
      \node at (3.75,4) (gate4) {};
      \node at (2,0.5) (low) {};
      \node at (2,2.5) (high) {};

      \draw[thick] (gate1.center) -- (low.center) -- (gate4.center);
      \draw[thick] (gate2.center) -- (high.center) -- (gate3.center);

      \node at (-1,2) {$\epsilon \coloneqq$};
      \node at (1,5) {$\epsilon : (A_p^{n_p-1}, \dots, A_1^{n_1-1}, A_1^{n_1}, \dots, A_p^{n_p}) \to ()$};

      \node at (1.05,3.7) {$\dots$};
      \node at (2.95,3.7) {$\dots$};
      \node at ($(low) !.7! (high)$) {$\vdots$};

      \begin{scope}[xshift=14cm]
      \draw[very thin] (0,0) rectangle (4,4);

      \tikzstyle{point}=[circle,draw,fill,inner sep=1pt];
      \node at (0.25,0) (gate1) {};
      \node at (1.5,0) (gate2) {};
      \node at (2.5,0) (gate3) {};
      \node at (3.75,0) (gate4) {};
      \node at (2,3.5) (low) {};
      \node at (2,1.5) (high) {};

      \draw[thick] (gate1.center) -- (low.center) -- (gate4.center);
      \draw[thick] (gate2.center) -- (high.center) -- (gate3.center);

      \node at (-1,2) {$\eta \coloneqq$};
      \node at (1,5) {$\eta : () \to (A_1^{n_1}, \dots, A_p^{n_p}, A_p^{n_p-1}, \dots, A_1^{n_1-1})$};

      \node at (1.05,0.3) {$\dots$};
      \node at (2.95,0.3) {$\dots$};
      \node at ($(low) !.5! (high)$) {$\vdots$};
      \end{scope}
      
   \end{tikzpicture}
\end{figure}
  \noindent We emphasize that these graphs are valid $\cC$-valued
  graphs and hence yankable: as they have no inner nodes, the
  condition of Definition~\ref{def:yankable} and the condition (ii) of
  Definition~\ref{def:valued} are vacuously satisfied.  They satisfy
  the yanking equalities, hence the category is
  autonomous.
\end{proof}

\section{Freeness of $L(\cC)$ over $\cC$} \label{sec:freeness}

What remains to do is to show that $L(\cC)$ is the free autonomous
category generated by $\cC$. The first step is to show that $\cC$ can be
embedded functorialy in $L(\cC)$. Then, assuming that $\cC$ is
autonomous, we define the value of a $\cC$-valued graph. Finally we show
that these two constructions are adjoint, hence the freeness of
$L(\cC)$.

\subsection{Functorial embedding of the original category} \label{sec:func-embed}

We define a strongly monoidal functorial embedding of $\cC$ in $L(\cC)$.

\begin{defi}
   Let $F : \cC \to L(\cC)$ be such that for all $A \in \Ob(\cC), F(A) =
   (A^0)$ and \begin{figure}[H]
   \centering
   \begin{tikzpicture}[every node/.style={node distance=0.3cm},scale=0.5]
      \draw[very thin] (0,0) rectangle (4,-4);

      \tikzstyle{point}=[circle,draw,fill,inner sep=1pt];
      \node at (2,-4) (gate1) {};
      \node[draw,rectangle] at (2,-2) (f) {$f$};
      \node at (2,0) (gate3) {};

      \draw[thick] (gate1.center) -- (f) -- (gate3.center);
      \node at (2,0.5) {$(A^0)$};
      \node at (2,-4.5) {$(B^0)$};

      \node at (-1.7,-2) {$F(f) \coloneqq$};

      \begin{scope}[xshift=9cm,yshift=-4cm]
      \draw[very thin] (0,0) rectangle (4,4);

      \node at (1,4) (gate1) {};
      \node at (3,4) (gate2) {};
      \node[draw,rectangle] at (2,2) (id) {$1_{A\otimes B}$};
      \node at (2,0) (gate3) {};
      \node at (1,4.5) {$(A^0,$};
      \node at (3,4.5) {$B^0)$};
      \node at (2,-0.5) {$(A\otimes B^0)$};

      \draw[thick] (gate1.center) -- (id);
      \draw[thick] (gate2.center) -- (id);
      \draw[thick] (id) -- (gate3.center);

      \node at (-2,2) {$\mu_{A,B} \coloneqq$};

     \begin{scope}[xshift=9cm]
      \draw[very thin] (0,0) rectangle (4,4);

      \node[draw,rectangle] at (2,2) (id) {$1_I$};
      \node at (2,0) (gate3) {};

      \node at (2,4.5) {$()$};
      \node at (2,-0.5) {$(I^0)$};

      \draw[thick] (id) -- (gate3.center);

      \node at (-1.5,2) {$\lambda \coloneqq$};

     \end{scope}

      \end{scope}
   \end{tikzpicture}
\end{figure}
\end{defi}

\noindent As a consequence of the first rewrite rule of our relation
$\leq$, $F$ is a functor.  Moreover, it is strongly monoidal, with the
natural isomorphism $\mu_{A,B} : (A^0,B^0) \rightarrow (A \otimes B^0)$
and the isomorphism $\lambda : () \rightarrow (I^0)$.  The coherence
equations translate into the following equalities, which hold from the
rewrite rules defined earlier: \begin{figure}[H]
  \centering
  \begin{tikzpicture}[scale=0.6,every node/.style={scale=0.6}]
    
    \draw (0,0) rectangle (4,4);
    \node[rectangle,draw] at (2,1) (mu1) {$1_{A\otimes B\otimes C}$};
    \node[rectangle,draw] at (1.5,3) (mu2) {$1_{A\otimes B}$};
    \draw (1,4) -- (mu2) -- (mu1) -- (2,0);
    \draw (2,4) -- (mu2);
    \draw (3,4) -- (mu1);

    \node at (4.75,2) {\Large $=$};

    \begin{scope}[xshift=5.5cm]
     \draw (0,0) rectangle (4,4);
     \node[rectangle,draw] at (2,1) (mu1) {$1_{A\otimes B\otimes C}$};
     \node[rectangle,draw] at (2.5,3) (mu2) {$1_{B\otimes C}$};
     \draw (3,4) -- (mu2) -- (mu1) -- (2,0);
     \draw (2,4) -- (mu2);
     \draw (1,4) -- (mu1);
    \end{scope}

    \begin{scope}[xshift=12cm]
    \draw (0,0) rectangle (4,4);
    \node[rectangle,draw] at (2,1) (mu1) {$1_A$};
    \node[rectangle,draw] at (1.5,3) (mu2) {$1_I$};
    \draw (mu2) -- (mu1) -- (2,0);
    \draw (3,4) -- (mu1);

    \node at (4.75,2) {\Large $=$};
    \begin{scope}[xshift=5.5cm]
      \draw (0,0) rectangle (4,4);
      \draw (2,0) -- (2,4);
    \node at (4.75,2) {\Large $=$};
    \end{scope}[xshift=5.5cm]

    \begin{scope}[xshift=11cm]
     \draw (0,0) rectangle (4,4);
     \node[rectangle,draw] at (2,1) (mu1) {$1_{A}$};
     \node[rectangle,draw] at (2.5,3) (mu2) {$1_I$};
     \draw (mu2) -- (mu1) -- (2,0);
     \draw (1,4) -- (mu1);
    \end{scope}
    \end{scope}

  \end{tikzpicture}
\end{figure} 

\subsection{Value of a valued graph} \label{sec:value}

Given a $\cC$-valued graph, we cannot in general interpret this graph as
an arrow of $\cC$, because $\cC$ is not always autonomous. But when
$\cC$ is autonomous, we can use the notion of \textbf{value} $v(\Gamma)$
of a valued graph $\Gamma$. We briefly recall its definition, taken
from~\cite{joyal1988planar} and adapted to our terminology. We start with
the definition of the value of a graph in the monoidal case. To do so,
we restrict our graphs further by requiring that the edges are vertical,
in the following sense.

\begin{defi}
  A \textbf{progressive graph} is a graph $\Gamma$ such that for all
  edge $\gamma$, the projection on the second coordinate of $\gamma$ is
  injective.
\end{defi}

As a consequence, the rotation numbers of edges are null in a progressive
graph.  To define the value of such a graph, we decompose it into
simpler slices.

\begin{defi}
  A \textbf{prime graph} is a progressive graph with exactly one inner
  node, and such that every edge in the graph is connected to it.

  An \textbf{invertible graph} is a progressive graph with no inner
  node.

  An \textbf{elementary slice} is a progressive graph $\Gamma$ that can
  be decomposed as $G_1 \otimes \dots \otimes G_n$ where for each $1
  \leq i \leq n$, $G_i$ is prime or invertible.
\end{defi}
\begin{figure}[H]
  \centering
   \begin{tikzpicture}[every node/.style={node distance=0.3cm,scale=0.7},scale=0.4]
      \draw[very thin] (0,0) rectangle (12,4);

      \node at (0.8,4) (gate1) {};
      \node at (3.2,4) (gate2) {};
      \node[draw,circle,fill,inner sep=2pt] at (2,2) (id) {};

      \draw[thick] (0.7,4) -- (2,2);
      \draw[thick] (2,4) -- (2,2);
      \draw[thick] (3.3,4) -- (2,2);
      \draw[thick] (2,2) -- (1,0);
      \draw[thick] (2,2) -- (3,0);

      \draw[thick] (5,4) -- (6,0);
      \draw[thick] (7.4,4) -- (7,0);
      \node[draw,circle,fill,inner sep=2pt] at (10.5,2) (id2) {};
      \draw[thick] (9.5,4) -- (10.5,2);

      \draw[dotted] (4,4) -- (4,0);

      \draw[dotted] (9,4) -- (9,0);

\begin{scope}[every node/.style={scale=0.8}]
\draw[decorate,decoration={brace,amplitude=6pt}] (-1,0) -- node[left] {\begin{tabular}{c} Elementary \\ slice \end{tabular}} (-1,4);

\draw[decorate,decoration={brace,amplitude=6pt}] (4,-1  ) -- (0,-1  );
\node at (2,-3) {
\begin{tabular}{c} Prime \\ graph \end{tabular}};

\draw[decorate,decoration={brace,amplitude=6pt}] (9,-1) -- (4,-1);
\node at (6.5,-3) {
\begin{tabular}{c} Invertible \\ graph \end{tabular}};

\draw[decorate,decoration={brace,amplitude=6pt}] (12,-1) -- (9,-1);
\node at (10.5,-3) {
\begin{tabular}{c} Prime \\ graph \end{tabular}};
\end{scope}
   \end{tikzpicture}
\end{figure} When these graphs are valued, we can give
them a value $v(\Gamma)$.  The value $v(\Gamma)$ of a prime graph
$\Gamma$ is the value of its unique inner node and the value of an
invertible graph is the identity of its domain (which is equal to its
codomain). Finally, we define the value of an elementary slice $\Gamma =
G_1 \otimes \dots \otimes G_n$ by $v(\Gamma) = v(G_1) \otimes \dots
\otimes v(G_n)$, which is independent of the decomposition.

Notice that any progressive graph $\Gamma$ can be written as $\Gamma =
G_1 \circ \dots \circ G_p$ where $G_i$ are elementary slices. To define
the value of progressive graphs, we need the following lemma, whose
proof is a direct consequence of Proposition~1.1
in~\cite{joyal1991geometry}.
\begin{lemma} \label{lemma:decomposition}
  Let $\Gamma$ be a progressive graph. If $\Gamma = G_1 \circ \dots
  \circ G_p = G_1' \circ \dots \circ G_q'$ where the $G_i$ and $G_j'$
  are elementary slices, then $v(G_1) \circ \dots \circ v(G_p) = v(G_1')
  \circ \dots \circ v(G_q')$
\end{lemma}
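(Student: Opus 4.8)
The plan is to read this as a well-definedness statement: the composite value assigned to a progressive graph does not depend on the chosen slicing into elementary slices. The whole argument hinges on the structural result of~\cite{joyal1991geometry} (their Proposition~1.1), which I would invoke to reduce the geometric problem to a purely combinatorial one. Concretely, that proposition lets us connect any two decompositions $\Gamma = G_1 \circ \dots \circ G_p = G_1' \circ \dots \circ G_q'$ of the same progressive graph by a finite sequence of elementary moves of two kinds: (a) inserting or deleting an invertible slice, equivalently sliding a regular slicing level across a region containing no inner node; and (b) exchanging the vertical order of two inner nodes that do not interact (no edge path joins them), i.e.\ whose heights may be swapped without either node's edges meeting the other.

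First I would check that $v$ is unchanged under move (a). An invertible slice has no inner node, so by definition its value is the identity on its (common) domain and codomain. Inserting it into the composite $v(G_1) \circ \dots \circ v(G_p)$ therefore amounts to composing with an identity arrow of $\cC$, which changes nothing, and deletion is symmetric. This move also reconciles decompositions of different lengths, since any mismatch in the number of slices can be absorbed by identity slices.

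Second I would check that $v$ is unchanged under move (b). If two inner nodes $x$ and $y$ sitting in adjacent elementary slices do not interact, then, after tensoring each with the identities on all the other edges, their combined contribution to the composite is of the form $(f \otimes 1) \circ (1 \otimes g)$ in one order and $(1 \otimes g) \circ (f \otimes 1)$ in the other, where $f = v_0(x)$ and $g = v_0(y)$. The bifunctoriality equation~(\ref{eqn:bifunc}) forces both to equal $f \otimes g$, so the composite value is preserved. Since the value of an elementary slice is by definition the tensor product of the values of its prime and invertible components, this local computation is exactly what is needed.

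Finally I would conclude by induction on the length of the connecting sequence of moves furnished by Proposition~1.1: each move preserves the composite value, so the two decompositions yield the same morphism of $\cC$. The main obstacle is not the algebra --- which reduces entirely to bifunctoriality and the triviality of invertible slices --- but the careful matching of the geometric moves of~\cite{joyal1991geometry} with these two algebraic identities, and in particular verifying that the exchange move always presents the two non-interacting nodes in the separated form $(f \otimes 1) \circ (1 \otimes g)$ once they are tensored with the appropriate identities. With the cited proposition in hand, however, this bookkeeping is routine.
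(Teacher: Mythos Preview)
Your proposal is correct and follows exactly the approach of the paper: the paper does not actually prove this lemma but simply records that it ``is a direct consequence of Proposition~1.1 in~\cite{joyal1991geometry}'', which is precisely the result you invoke. Your write-up merely unpacks that citation into the two elementary moves (inserting/deleting an invertible slice, and interchanging non-interacting nodes via bifunctoriality), so it is a faithful elaboration of the paper's one-line justification.
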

\noindent This defines the value of progressive graphs.

The value of general valued graphs is obtained by making the units and
counits explicit in non progressive edges.  More precisely, given a
valued graph $\Gamma$ and $\epsilon > 0$ we define the progressive
graph $\Gamma_\epsilon$ by replacing each non progressive edge as
follows.  First, horizontal segments are eliminated, using the following
replacements (and their upside-down counterparts):
\begin{figure}[H]
  \centering
\begin{tikzpicture}[scale=0.9]

\draw (-1.5,1) -- (-1,0) -- (1,0) -- (1.5,1);
\node at (2.25,0) {$\mapsto$};
\begin{scope}[xshift=4.5cm]
\draw (-1.5,1) -- (-1,0) -- (0,-0.5) -- (1,0) -- (1.5,1);
\draw[dotted] (-1,0) -- (1,0);
\draw[<->] (1.2,0) edge node[right] {$\epsilon$} (1.2,-0.5);
\end{scope}

\begin{scope}[xshift=9cm]
\draw (-1.5,1) -- (-1,0) -- (1,0) -- (1.5,-1);
\node at (2.25,0) {$\mapsto$};
\begin{scope}[xshift=4.5cm]
\draw (-1.5,1) -- (-1,0) -- (-1,-0.5) -- (1,0.5) -- (1,0) -- (1.5,-1);
\draw[dotted] (-1,0) -- (1,0);
\draw[<->] (1.2,0.5) edge node[right] {$\epsilon$} (1.2,0);
\end{scope}
\end{scope}

\end{tikzpicture}
\end{figure} Then, the singular points at turns
are replaced by inner nodes with the appropriate valuation. Let
$\gamma$ be an edge and be $x$ a singular point in $\gamma$ such that
both of its adjacent segments are above $x$. The case where they are
both below is similar. Let $a < b \in [0,1]$ be such that
$[\gamma(a),x]$ and $[x,\gamma(b)]$ are strictly included in the
adjacent segments of $x$.  Finally let $(A,p) = v_1(\gamma)$ and $n =
p + \rho(\gamma_{|[0,a]})$. As the category in which the graph is
valued is autonomous, $A$ has an $(n+1)$-fold right adjoint denoted by
$A^{(n+1)}$, and there is an associated counit $\epsilon_{A^{(n+1)}} :
A^{(n)} \otimes A^{(n+1)} \to I$. We replace $x$ by an inner node
valued by $\epsilon_{A^{(n+1)}}$.  The case for the unit is symmetric.
\begin{figure}[H]
  \centering
  \begin{tikzpicture}
    \node at (-1,1.25) {$\gamma(a)$};
    \node at (1,1.25) {$\gamma(b)$};
    \draw (-1,1) -- (0,0) -- (1,1);
    \node at (2,0.5) {$\mapsto$};
    \begin{scope}[xshift=4cm]
   \node at (-1,1.25) {$\gamma(a)$};
    \node at (1,1.25) {$\gamma(b)$};
      \node[rectangle,draw] at (0,0) (n) {$\epsilon_{A^{(n+1)}}$};
      \draw (-1,1) -- (n) -- (1,1);
    \end{scope}
    
    \begin{scope}[xshift=8cm,yshift=1cm,yscale=-1]
    \node at (-1,1.25) {$\gamma(a)$};
    \node at (1,1.25) {$\gamma(b)$};
    \draw (-1,1) -- (0,0) -- (1,1);
    \node at (2,0.5) {$\mapsto$};
    \begin{scope}[xshift=4cm]
   \node at (-1,1.25) {$\gamma(a)$};
    \node at (1,1.25) {$\gamma(b)$};
      \node[rectangle,draw] at (0,0) (n) {$\eta_{A^{(n-1)}}$};
      \draw (-1,1) -- (n) -- (1,1);
    \end{scope}
    \end{scope}
  \end{tikzpicture}
\end{figure}
\noindent Note that these transformations are different from the notion
of replacement introduced earlier in Section~\ref{sec:replacement}.
Here, we only show how to convert a valued graph into a progressive
graph, but do not identify them using an equivalence relation as for the
replacement pairs of Section~\ref{sec:replacement}.

It is shown in~\cite{joyal1988planar} that for $\epsilon$ small enough,
$v(\Gamma_\epsilon)$ is independent of $\epsilon$. We define
$\tilde{v}(\Gamma) \coloneqq v(\Gamma_\epsilon)$ for such an $\epsilon$.
This defines the value of $\cC$-valued graphs when $\cC$ is
autonomous. They also show in their Theorem~4 that this value is
invariant under deformations of graphs.  We state the following lemma
for later use:
\begin{lemma} \label{lemma:progressive}
  Let $f \in \Mor(L(\cC))$ be a $\cC$-valued graph, where $\cC$ is
  autonomous.  There is a $\cC$-valued progressive graph $g$ such that
  $\tilde{v}(f) = \tilde{v}(g)$. Moreover, when $f$ has no horizontal
  segment, $g$ can be obtained from $f$ by replacing singular points by
  inner nodes.
\end{lemma}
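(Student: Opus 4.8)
The plan is to recognize that the progressive graph $f_\epsilon$ already built in the definition of $\tilde{v}$ is exactly the graph $g$ we want, so no new construction is needed. Concretely, for $\epsilon$ small enough I set $g \coloneqq f_\epsilon$. By construction $f_\epsilon$ is progressive: the first step of the procedure eliminates every horizontal segment, and the second step replaces each turning singular point by an inner node valued by a unit or counit, after which every edge has a strictly monotone second coordinate. Hence $g$ is a $\cC$-valued progressive graph, and by the very definition of $\tilde{v}$ we have $\tilde{v}(f) = v(f_\epsilon) = v(g)$.

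It then remains to see that $\tilde{v}(g) = v(g)$, which closes the first claim. This holds because $g$ is already progressive: injectivity of the second-coordinate projection forbids both horizontal segments and interior turning points, so the two steps of the construction applied to $g$ are vacuous, giving $g_{\epsilon'} = g$ and therefore $\tilde{v}(g) = v(g_{\epsilon'}) = v(g)$. Combining this with the previous paragraph yields $\tilde{v}(f) = \tilde{v}(g)$. For the \emph{moreover} part, I would simply observe that if $f$ has no horizontal segment, the first step of the construction of $f_\epsilon$ is vacuous, so $g = f_\epsilon$ differs from $f$ only through the second step, namely replacing the turning singular points by inner nodes; the remaining non-turning singular points survive as bends of the now monotone edges. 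This is precisely the asserted description of $g$.

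The one point deserving care---and where autonomy of $\cC$ is used---is that the inner nodes inserted at the turns carry well-typed valuations, so that $g$ is a bona fide $\cC$-valued graph and not merely a progressive one. This is handled by the construction recalled just before the statement: at a turn of an edge $\gamma$ with $v_1(\gamma) = (A,p)$ and $n = p + \rho(\gamma_{|[0,a]})$, the autonomy of $\cC$ provides the adjoint $A^{(n+1)}$ together with the counit $\epsilon_{A^{(n+1)}} : A^{(n)} \otimes A^{(n+1)} \to I$ (or the symmetric unit $\eta_{A^{(n-1)}}$), which labels the new node; the incident edges are relabelled with null integer part accordingly, so conditions (i)--(iii) of Definition~\ref{def:valued} hold. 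Since every rotation number in a progressive graph vanishes, $g$ is automatically yankable, which is what legitimises the equality $\tilde{v}(g) = v(g)$ used above.
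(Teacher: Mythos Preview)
Your argument is correct and is precisely the approach the paper has in mind: the lemma is stated immediately after the description of the $\Gamma_\epsilon$ construction and is left without proof, because taking $g \coloneqq f_\epsilon$ for small $\epsilon$ yields a progressive $\cC$-valued graph with $\tilde{v}(f) = v(f_\epsilon) = v(g) = \tilde{v}(g)$, and the ``moreover'' clause is just the observation that the horizontal-segment step is vacuous when $f$ has none. Your added remarks on why $g$ is a bona fide $\cC$-valued graph and why $\tilde{v}(g)=v(g)$ for progressive $g$ are welcome clarifications that the paper omits.
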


In order to make $\tilde{v}$ a strict monoidal functor, we need to prove
the invariance of $\tilde{v}$ under replacement.
\begin{lemma}
  Let $f \in \Mor(L(\cC))$ be a $\cC$-valued graph, where $\cC$ is
  autonomous.  Let $(G^1,G^2)$ be a replacement pair such that
  $\tilde{v}(G^1) = \tilde{v}(G^2)$. Let $t$ be an
  occurrence of $G^1$ in $f$. Then $\tilde{v}(f [ G^1 \coloneqq G^2 ]_t)
  = \tilde{v}(f)$.
\end{lemma}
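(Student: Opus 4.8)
The plan is to localize the occurrence inside $f$ so that $\phi_t(G^1)$ appears as a single tensor factor in a horizontal band of $f$, and then to propagate the hypothesis $\tilde{v}(G^1)=\tilde{v}(G^2)$ through the resulting decomposition. Before doing so I would record two preliminary facts. First, the affine map $\phi_t(x,y)=(a+x(b-a),\,c+y(d-c))$ is strictly increasing in each coordinate, so the isotopy from the identity to $\phi_t$ is a polarisation-preserving deformation of graphs; by the deformation invariance of $\tilde{v}$~\cite{joyal1988planar} this gives $\tilde{v}(\phi_t(G^1))=\tilde{v}(G^1)$ and $\tilde{v}(\phi_t(G^2))=\tilde{v}(G^2)$. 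Second, $\tilde{v}$ is compatible with the composition and horizontal juxtaposition of $L(\cC)$: the passage $\Gamma\mapsto\Gamma_\epsilon$ making units and counits explicit is a purely local operation on edges, so it commutes with vertical stacking and with horizontal juxtaposition, after which Lemma~\ref{lemma:decomposition} and the identity $v(G_1\otimes\dots\otimes G_n)=v(G_1)\otimes\dots\otimes v(G_n)$ on elementary slices yield $\tilde{v}(g\circ h)=\tilde{v}(g)\circ\tilde{v}(h)$ and $\tilde{v}(g\otimes h)=\tilde{v}(g)\otimes\tilde{v}(h)$.

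The core step is the geometric localization. Write $B=[a,b]\times[c,d]$ for the occurrence box, so that $\phi_t(G^1)=f\cap B$ and no node of $f$ lies on $\partial B$. Because the edges of the unit graph $G^1$ meet the boundary of the square only at its top and bottom gates, the edges of $\phi_t(G^1)$ leave $B$ only across $y=c$ and $y=d$; consequently, within the band $\mathbb{R}\times[c,d]$ no edge of $f$ can run from the region $\{x<a\}$ to the region $\{x>b\}$. Using deformation invariance I would move $f$, without altering it on $B$, so that $y=c$ and $y=d$ are regular slices met transversally (progressively) by the edges crossing them, and so that near the vertical segments $x=a$ and $x=b$, $c\le y\le d$, all edges are vertical. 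This exhibits the middle band $M=f\cap(\mathbb{R}\times[c,d])$ as a horizontal juxtaposition $M=L\otimes\phi_t(G^1)\otimes R$, with $L$ and $R$ collecting the material in $\{x<a\}$ and $\{x>b\}$, and exhibits $f$ as a composite $f=f_{\mathrm{top}}\circ M\circ f_{\mathrm{bot}}$ of the parts of $f$ above $y=d$, inside the band, and below $y=c$.

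Applying the compatibility of $\tilde{v}$ with $\circ$ and $\otimes$ then gives
\begin{align*}
\tilde{v}(f)=\tilde{v}(f_{\mathrm{top}})\circ\big(\tilde{v}(L)\otimes\tilde{v}(\phi_t(G^1))\otimes\tilde{v}(R)\big)\circ\tilde{v}(f_{\mathrm{bot}}).
\end{align*}
Since the replacement $f[G^1\coloneqq G^2]_t$ changes only the contents of $B$, from $\phi_t(G^1)$ to $\phi_t(G^2)$, and since $G^1$ and $G^2$ share the same domain and codomain the gate positions on $y=c$ and $y=d$ are unchanged, the pieces $f_{\mathrm{top}},f_{\mathrm{bot}},L,R$ are identical for both graphs and the same deformation localizes the box. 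The analogous formula therefore computes $\tilde{v}(f[G^1\coloneqq G^2]_t)$ with $\tilde{v}(\phi_t(G^2))$ in place of $\tilde{v}(\phi_t(G^1))$, and the two agree because $\tilde{v}(\phi_t(G^2))=\tilde{v}(G^2)=\tilde{v}(G^1)=\tilde{v}(\phi_t(G^1))$, whence $\tilde{v}(f[G^1\coloneqq G^2]_t)=\tilde{v}(f)$.

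I expect the localization to be the main obstacle. One must justify that there is an ambient deformation, the identity on $B$, straightening $f$ so that $y=c$ and $y=d$ become regular slices with transversal crossings and so that edges are vertical near the sides $x=a$ and $x=b$, so that the middle band genuinely factors as the tensor $L\otimes\phi_t(G^1)\otimes R$. This transversality and taming argument, together with the verification that edges of the occurrence do not escape through the vertical sides of the box, is where the real work lies; the remainder is bookkeeping with the functoriality of $\tilde{v}$ and its invariance under $\phi_t$.
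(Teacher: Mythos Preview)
Your argument is essentially the same as the paper's: decompose $f$ vertically at the regular levels $c$ and $d$ into three slices, then split the middle slice horizontally at $a$ and $b$ so that the occurrence appears as a tensor factor, and conclude by the compatibility of $\tilde{v}$ with $\circ$ and $\otimes$ together with the hypothesis $\tilde{v}(G^1)=\tilde{v}(G^2)$. You are in fact more careful than the paper about why the horizontal splitting is legitimate (edges only exit the box through its top and bottom, and one may deform so that the vertical sides are met trivially), a point the paper leaves implicit.
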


\begin{proof}
Let $f$ be a valued unit graph, and $t = (a,b,c,d)$ be an occurrence of
$G^1$ in $f$. As $\tilde{v}(f)$ is invariant under deformation of $f$,
we can assume that $f$ contains no horizontal segment.
Up to another regular deformation, $c$ and $d$ are
regular slices. Hence $f$ can be decomposed into graphs $f_1$, $f_2$ and
$f_3$ with boundaries respectively $(0,c)$, $(c,d)$ and $(d,1)$.  We
have $f = f_1 \circ f_2 \circ f_3$ and $f [G^1 \coloneqq G^2]_t = f_1
\circ f_2 [G^1 \coloneqq G^2]_{t'} \circ f_3$, where $t' = (a,b,0,1)$.
Similarly, $f_2$ can be decomposed in the vertical slices $g_1$, $g_2$
and $g_3$, with vertical boundaries respectively $(0,a)$, $(a,b)$,
$(b,1)$.  We have $f_2 [G^1 \coloneqq G^2]_t = g_1 \otimes G^2 \otimes
g_3$ hence $v(f_2 [G^1 \coloneqq G^2]_t) = \tilde{v}(g_1) \otimes
\tilde{v}(G^2) \otimes \tilde{v}(g_3)$ Finally, as $\tilde{v}(G^1) =
\tilde{v}(G^2)$ and $v(f) = \tilde{v}(f_1) \circ \tilde{v}(f_2) \circ
\tilde{v}(f_3)$, we have the required invariance.
\end{proof}
\noindent Hence, as $\tilde{v}$ is compatible with the two relations
$\leq$ and $\leadsto$, it is compatible with $\sim_\cC$. So $\tilde{v}
: L(\cC) \to \cC$ is defined and is a strict monoidal functor.

\subsection{A pair adjoint functors} \label{sec:adjunction}

The objects introduced in our construction can be seen as part of an
adjunction between a free and a forgetful functor. This will show that
$L(\cC)$ has the required categorical properties to be called the free
autonomous category generated by the monoidal category $\cC$.

Let $\Mon$ be the category of strict monoidal categories and strong
monoidal functors between them. We denote by $\Nom$ (as in
auto\emph{nom}ous) the category of autonomous categories and strong
monoidal functors between them.\footnote{Recall that these functors
  automatically preserve adjoints.}  Our construction $L$ corresponds to
a functor from $\Mon$ to $\Nom$.  To make $L$ a functor, we need to
define how it translates a strong monoidal functor $f : \cC \to \cD$ to
$L(f) : L \cC \to L \cD$.  Let $\mu_{(A_1, \dots, A_n)} : f(A_1 \otimes
\dots \otimes A_n) \simeq f(A_1) \otimes \dots \otimes f(A_n)$ be
coherence isomorphism associated to $f$ (with $\mu_{()} : f(I_{\cC})
\simeq I_{\cD}$).  We define $L(f)$ by

\begin{align*}
L(f) :\,& (A_1^{n_1}, \dots, A_p^{n_p}) \mapsto (f(A_1)^{n_1},\dots,
f(A_p)^{n_p})) \\ & (\Gamma,v_0,v_1) \mapsto (\Gamma, v_0', f \circ v_1)
\end{align*}

\noindent We cannot simply define $v_0'$ by $f \circ v_0$ as we have to compose
with the coherence isomorphisms of $f$ to ensure that the domain and
codomain of the valuation match with the product of the valuation of the
incoming and outgoing edges. Let $x$ be a node and $(A_1, \dots, A_p)$
(respectively $(B_1, \dots, B_q)$) be the valuations of its input
(respectively output) edges. We define
$$v_0'(x) = \mu_{(B_1, \dots, B_q)} \circ (f \circ v_0(x)) \circ
\mu^{-1}_{(A_1, \dots, A_p)}$$ One can check that $(\Gamma, v_0', f
\circ v_1)$ is indeed a $\cD$-valued graph and that $L$ is a strict
monoidal functor.

We will show that this functor has a right adjoint $R : \Nom \to \Mon$,
the inclusion functor. We define the unit for this
adjunction by $\eta_{\cC} \coloneqq F_{\cC} : \cC \to R L \cC$ where $F$
is the functorial embedding defined in Section~\ref{sec:func-embed}.

The counit $\epsilon_{\cC} : L R \cC \to \cC$ corresponds to the value
functor introduced in section~\ref{sec:value}. It is defined on objects
by $(A_1^{n_1}, \dots, A_p^{n_p}) \mapsto A_1^{(n_1)} \otimes \dots
\otimes A_p^{(n_p)}$. In other words, the formal product is sent to the
actual product of the original category, and the formal adjoints are
sent to the actual adjoints.  An arrow $f$, that is to say a
$\cC$-valued graph, is sent to its value $v(f)$.

We now move on to the proof of the unit-counit equations, starting with
$(R \epsilon) \circ (\eta R) = 1_R$.  First, $\eta_{R(\cD)}$ takes an
arrow in an autonomous category, seen as a monoidal category, and
bundles it in a diagram. Then, $R \epsilon_\cD$ evaluates this diagram
in $\cD$ (which is possible because $\cD$ is actually autonomous), and
the result is seen as an arrow in $R \cD$.  \begin{figure}[H]
  \centering
   \begin{tikzpicture}[every node/.style={node distance=0.3cm},scale=0.5]
      \draw[very thin] (0,0) rectangle (4,4);

      \tikzstyle{point}=[circle,draw,fill,inner sep=1pt];
      \node at (2,4) (gate1) {};
      \node[draw,rectangle] at (2,2) (id) {$f$};
      \node at (2,-0.5) {$(B^0)$};
      \node at (2,4.5) {$(A^0)$};

      \draw[thick] (gate1.center) -- (id);
      \draw[thick] (id) -- (2,0);

      \node at (-6,2) (f1) {$f \in \cC(A,B)$};
      \draw[|->] (-3,2) -- (-1,2);
      \node at (-2,2.5) {$\eta_{R(\cD)}$};

      \node at (8,2) (f) {$f$};
      \draw[|->] (5,2) -- (7,2);
      \node at (6,2.5) {$R(\epsilon_\cD)$};
      
   \end{tikzpicture}
\end{figure}
\noindent Hence the composition of the two is the identity.  Let us show
the remaining equality: $(\epsilon L) \circ (L \eta) = 1_L$.

\begin{align*}
  (L \eta)_\cC = L (\eta_\cC) &: L \cC \to L R L\cC \\ & (A_1^{n_1},
  \dots, A_p^{n_p}) \mapsto ((A_1^0)^{n_1}, \dots, (A_p^0)^{n_p})
\end{align*}
The functor $L(\eta_\cC)$ applies $\eta_\cC$ to the valuations of the
graph, and the result is composed with the coherence morphisms so that
the domains and codomains match with the incoming and outgoing
edges. Graphically, this consists in adding inner boxes in each node,
with morally the same inputs and outputs as the outer box it is
contained in.  Then, $\epsilon_{L(\cC)}$ evaluates the resulting graph
in $L(\cC)$.

\begin{thm}
  The following equality holds: $(\epsilon L) \circ (L \eta) = 1_L$.
\end{thm}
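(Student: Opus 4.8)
The plan is to prove that the composite endofunctor $G \coloneqq \epsilon_{L(\cC)} \circ L(\eta_\cC)$ of $L(\cC)$ is the identity. Both factors are strict monoidal: $\epsilon_{L(\cC)} = \tilde{v}$ was shown to be a strict monoidal functor in Section~\ref{sec:value}, and $L(\eta_\cC)$ is strict monoidal because on objects it relabels lists entrywise and $\otimes$ is concatenation. Hence $G$ is strict monoidal, and it suffices to check that $G$ fixes a generating set of objects and morphisms. By Lemma~\ref{lemma:progressive} together with the decomposition of progressive graphs into elementary slices, every arrow of $L(\cC)$ is a composite, under $\circ$ and $\otimes$, of three kinds of generators: prime graphs carrying a single node valued by some $f \in \Mor(\cC)$ (the ``boxes''), and the elementary cups $\epsilon_{(A^n)}$ and caps $\eta_{(A^n)}$ for $A \in \Ob(\cC)$, $n \in \mathbb{Z}$, introduced in the proof that $L(\cC)$ is autonomous. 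So the work reduces to checking $G$ on objects and on these generators.

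On objects, $L(\eta_\cC)$ sends $(A_1^{n_1}, \dots, A_p^{n_p})$ to $((A_1^0)^{n_1}, \dots, (A_p^0)^{n_p})$, an object of $L(R(L(\cC)))$ whose entries are the objects $(A_i^0)$ of $R(L(\cC)) = L(\cC)$. Applying $\epsilon_{L(\cC)}$ returns $(A_1^0)^{(n_1)} \otimes \dots \otimes (A_p^0)^{(n_p)}$, the product taken in $L(\cC)$. From the description of the adjoints in $L(\cC)$, the $n$-fold adjoint of $(A^0)$ is $(A^n)$, so each factor is $(A_i^{n_i})$; since the product is concatenation of lists, the result is exactly $(A_1^{n_1}, \dots, A_p^{n_p})$. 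Thus $G$ fixes objects.

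On a box generator, i.e.\ a prime graph with one node valued $f : A_1 \otimes \dots \otimes A_p \to B_1 \otimes \dots \otimes B_q$ presented with $p$ inputs and $q$ outputs, the functor $L(\eta_\cC)$ leaves the graph intact, relabels the edges $A_i, B_j$ to $(A_i^0),(B_j^0)$, and revalues the node by the arrow $\mu_{(B_1, \dots, B_q)} \circ F(f) \circ \mu^{-1}_{(A_1, \dots, A_p)}$ of $L(\cC)$. As this is again a prime graph, $\epsilon_{L(\cC)}$ returns the value of its node, namely that very arrow of $L(\cC)$. Read as a $\cC$-valued graph it is the vertical stack of the identity box $\mu^{-1}_{(A_1, \dots, A_p)} = 1_{A_1 \otimes \dots \otimes A_p}$, the box $f$, and the identity box $\mu_{(B_1, \dots, B_q)} = 1_{B_1 \otimes \dots \otimes B_q}$; two applications of the vertical-composition rule $G_1 \leq G_1'$ absorb the two identity boxes into $f$, leaving the box $f$ with $p$ inputs and $q$ outputs. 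Hence $G$ fixes the box generators.

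On a cup $\epsilon_{(A^n)}$ there is no inner node, so $L(\eta_\cC)$ merely relabels its edge, producing the cup for $((A^0)^n)$; evaluating it with $\epsilon_{L(\cC)}$ makes the edge progressive, replacing the turn by an inner node valued by the counit $\epsilon_{(A^0)^{(n)}}$ of $L(\cC)$, and the turning-number bookkeeping of Section~\ref{sec:value} fixes the adjoint index at $n$, so the value is $\epsilon_{(A^0)^{(n)}} = \epsilon_{(A^n)}$; the cap case is symmetric. The main obstacle is exactly this last step: one must match the index bookkeeping of $\tilde{v}$ (the turning numbers and the valuation $v_1$) with the adjoint indices of the autonomous structure of $L(\cC)$, to be sure the value of the relabelled cup is the counit of $(A^n)$ rather than that of a neighbouring adjoint. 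A shortcut that avoids this computation is to observe that $G$ is a strong monoidal functor between autonomous categories and therefore preserves the chosen adjunctions, hence the counits and units, up to the coherence isomorphisms, which are identities here since $G$ fixes objects strictly; this leaves only the box case to verify by hand.
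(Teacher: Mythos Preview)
Your argument is essentially the paper's, repackaged: the paper proves $\tilde v(g)=f$ by induction on a decomposition of $g$ into prime and invertible factors, which amounts exactly to checking the strict monoidal functor $G$ on the generators you isolate (boxes, cups/caps, and identities). Two remarks.

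First, your appeal to Lemma~\ref{lemma:progressive} to justify that every arrow of $L(\cC)$ decomposes into boxes, cups, and caps is slightly off target: that lemma is about the value functor $\tilde v$ when the base category is autonomous, whereas here $\cC$ is merely monoidal. The statement you need is the more elementary fact that any $\cC$-valued graph can be sliced horizontally, at regular levels separating the inner nodes and the turns, into tensors of prime graphs, identities, and single cups or caps. This is what the paper carries out directly in its inductive step, and it does not require $\cC$ to be autonomous.

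Second, and more importantly, the shortcut you propose for the cup/cap case does not work. A strong monoidal functor sends adjunction data to adjunction data, so $(G(\epsilon),G(\eta))$ is again a unit--counit pair for the same pair of objects; but adjunction data for a fixed pair of objects is not unique. Any invertible endomorphism of $(A^n)$, for instance one arising from a nontrivial scalar $I\to I$ in $\cC$ transported along $\lambda$, produces a different such pair. Hence fixing objects strictly is not enough to conclude $G(\epsilon)=\epsilon$. You therefore genuinely need the direct computation you sketched just before---and that computation is correct, being exactly what the paper records in Figure~\ref{fig:l-eta-cup}. The shortcut should simply be dropped.
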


\begin{proof}
Let $f$ be an arrow in $L \cC$. Up to a deformation described in
Section~\ref{sec:value}, we can assume that it has no horizontal
segment. As $RL \cC$ is autonomous, we can apply
Lemma~\ref{lemma:progressive} to $L \eta_\cC (f) \in \Mor(LRL \cC)$:
$\tilde{v}(L \eta_\cC (f)) = \tilde{v}(g)$ where $g$ is progressive and
is obtained from $L \eta_\cC (f)$ by replacing singular points by inner
nodes.  But $L \eta_\cC (f)$ differs only from $f$ by the valuations:
the underlying graph is the same. Hence the decomposition of $g$ into
prime and invertible factors given by Lemma~\ref{lemma:decomposition}
induces a decomposition of $f$, where the factors are not necessarily
prime or invertible however.

We prove that $\tilde{v}(g) = f$ by induction on the number of factors
in the decomposition of $g$.  If $g$ is prime, let $x$ be its unique
inner node. There are two cases.  If $x$ is also an inner node in $f$,
then $g$ is indeed mapped to itself, as shown in Figure~\ref{fig:l-eta}.
\begin{figure}[p]
  \centering
   \begin{tikzpicture}[every node/.style={node distance=0.3cm},scale=0.6]
   \tikzstyle{point}=[circle,draw,fill,inner sep=1pt];

   \foreach \xs/\ys in {0cm/0cm,22cm/0cm} {
      \begin{scope}[xshift=\xs]
      \draw[very thin] (0,0) rectangle (4,4);

      \node at (0.8,4) (gate1) {};
      \node at (3.2,4) (gate2) {};
      \node[draw,rectangle] at (2,2) (id) {$f$};

      \draw[thick] (gate1.center) -- (id);
      \draw[thick] (gate2.center) -- (id);
      \draw[thick] (id) -- (0.8,0);
      \draw[thick] (id) -- (3.2,0);

      \node at (2,3.5) {$\dots$};
      \node at (2,0.5) {$\dots$};
      \node at (2,4.5) {$(A_1^{n_1}, \dots, A_p^{n_p})$};
      \node at (2,-0.5) {$(B_1^{n_1}, \dots, B_q^{m_q})$};
      \end{scope}
      }    
  
      \foreach \x/\lbl in {0cm/$L(\eta_\cC)$,13cm/$\epsilon_{L(\cC)}$} {
        \begin{scope}[xshift=\x]
      \draw[|->] (5.5,2) -- (7.5,2);
      \node at (6.5,2.5) {\lbl};
      \end{scope}
      }

      \begin{scope}[xshift=9cm,yshift=-2cm,scale=2]
      \draw[very thin] (0,0) rectangle (4,4);

      \tikzstyle{point}=[circle,draw,fill,inner sep=1pt];
      \node at (0.8,4) (gate1) {};
      \node at (3.2,4) (gate2) {};

      \begin{scope}[scale=0.5,xshift=2cm,yshift=2cm]
        \node[rectangle,draw,very thin,minimum width=1.8cm,minimum height=1.8cm,inner sep=0pt] at (2,2) (id) {};

        \tikzstyle{point}=[circle,draw,fill,inner sep=1pt];
        \node[draw,rectangle] at (2,2) (id2) {$f$};

        \draw[thick] ($(id.north)+(-0.8,0)$) -- (id2);
        \draw[thick] ($(id.north)+(0.8,0)$) -- (id2);
        \draw[thick] (id2) -- ($(id.south)+(-0.8,0)$);
        \draw[thick] (id2) -- ($(id.south)+(0.8,0)$);

       \node at (2,4) {\footnotesize $(A_1^{n_1}, \dots, A_p^{n_p})$};
       \node at (2,0) {\footnotesize $(B_1^{m_1}, \dots, B_q^{m_q})$};
      \end{scope}
      \node[rectangle,draw,very thin,minimum width=3cm,minimum height=3cm] at (2,2) (id) {};

      \draw[thick] (gate1.center) node[above] (a1) {$((A_1^0)^{n_1},$} -- (id);
      \draw[thick] (gate2.center) node[above] (ap) {$, (A_p^0)^{n_p})$} -- (id);
      \draw[thick] (id) -- (0.8,0) node[below] (b1) {$((B_1^0)^{m_1},$};
      \draw[thick] (id) -- (3.2,0) node[below] (bq) {$, (B_q^0)^{m_q})$};

      \node at ($(a1) !.5! (ap)$) {$\dots$};
      \node at ($(b1) !.5! (bq)$) {$\dots$};

      \node at (2,3.7) {$\dots$};
      \node at (2,0.3) {$\dots$};
      \end{scope}
      
   \end{tikzpicture}
   \caption{Equality in the case of a prime diagram  \label{fig:l-eta}}
\end{figure}
Otherwise, $x$ corresponds to a singular point in $f$ and is
labelled by a unit or a counit, and is mapped to itself as shown
in Figure~\ref{fig:l-eta-cup}.
\begin{figure}[p]
  \centering
   \begin{tikzpicture}[every node/.style={node distance=0.3cm},scale=0.4]
   \tikzstyle{point}=[circle,draw,fill,inner sep=1pt];

   \node at (-2.3,2) {$f =$};
   \foreach \xs/\ys in {0cm/0cm,23cm/-4.5cm} {
      \begin{scope}[xshift=\xs,yshift=\ys]
      \draw[very thin] (0,0) rectangle (4,4);

      \node at (0.8,4) (gate1) {};
      \node at (3.2,4) (gate2) {};

      \draw[thick] (gate1.center) -- (2,2);
      \draw[thick] (gate2.center) -- (2,2);

      \node at (0.8,4.5) {$(A^{n},$};
      \node at (3.2,4.5) {$A^{n+1})$};
      \node at (2,-0.5) {$()$};
      \end{scope}
      }

      \begin{scope}[xshift=11cm,yshift=0cm]
      \draw[very thin] (0,0) rectangle (4,4);

      \node at (0.8,4) (gate1) {};
      \node at (3.2,4) (gate2) {};

      \draw[thick] (gate1.center) -- (2,2);
      \draw[thick] (gate2.center) -- (2,2);

      \node at (2,4.7) {$((A^0)^n, (A^0)^{n+1})$};
      \node at (2,-0.5) {$()$};
      \end{scope}
  
        \begin{scope}[xshift=1cm]
      \draw[|->] (5.2,2) -- (7.8,2);
      \node at (6.5,2.7) {$L \eta_\cC$};
      \end{scope}

       \begin{scope}[xshift=13cm]
      \draw[|->] (5.2,1) -- (8,-1);
      \node at (7.2,0.6) {$\epsilon_{L(\cC)}$};
      \end{scope}

       \begin{scope}[xshift=13cm,yshift=-5cm,yscale=-1]
      \draw[|->] (5.2,1) -- (8,-1);
      \node at (7.2,0.6) {$\epsilon_{L(\cC)}$};
      \end{scope}
      \node at (6.7,-7) {$g = $};
      \begin{scope}[xshift=9cm,yshift=-11cm,scale=2]
      \draw[very thin] (0,0) rectangle (4,4);

      \tikzstyle{point}=[circle,draw,fill,inner sep=1pt];
      \node at (0.8,4) (gate1) {};
      \node at (3.2,4) (gate2) {};

      \begin{scope}[scale=0.5,xshift=2cm,yshift=2cm]
        \node[rectangle,draw,very thin,minimum width=1.2cm,minimum height=1.2cm,inner sep=0pt] at (2,2) (id) {};

        \tikzstyle{point}=[circle,draw,fill,inner sep=1pt];

        \draw[thick] ($(id.north)+(-0.8,0)$) -- (2,2);
        \draw[thick] ($(id.north)+(0.8,0)$) -- (2,2);

       \node at (2,4) {\footnotesize $(A^{n}, A^{n+1})$};
       \node at (2,0) {\footnotesize $()$};

      \end{scope}
      \node[rectangle,draw,very thin,minimum width=2cm,minimum height=2cm] at (2,2) (id) {};

      \draw[thick] (gate1.center) node[above] (a1) {$((A^0)^{n},$} -- (id);
      \draw[thick] (gate2.center) node[above] (ap) {$ (A^0)^{n+1})$} -- (id);
      \node at (2,-0.25) {$()$};

      \end{scope}
      
   \end{tikzpicture}
   \caption{Equality in the case of a counit \label{fig:l-eta-cup}}
\end{figure}
If $g$ is invertible, it is mapped to itself as well.

Now for the general case, suppose that $g = g_1 \circ g_2$ where $g_1$
and $g_2$ can be decomposed in a smaller number of factors (the case
$g_1 \otimes g_2$ is analogous).  As noted earlier, this induces a
decomposition $L \eta_\cC (f) = h_1 \circ h_2$ such that $g_i$ is the
progressive version of $h_i$.  This induces in turn a decomposition $f
= f_1 \circ f_2$ such that $h_i = L \eta_\cC (f_i)$. By induction,
$\tilde{v}(g_i) = f_i$. As $\tilde{v}$ is a strict monoidal functor, we get
$\tilde{v}(g) = f$.
\end{proof}

The notion of adjunction helps us to relate our construction with that
of~\cite{preller2007free} who describe the free
autonomous category generated by a category. Let $L' : \Cat \to \Mon$ be
the free monoidal category functor, and $R' : \Mon \to \Cat$ be the
corresponding forgetful functor.  By composition of the adjunctions, $L
\circ L'$ is left adjoint to $R' \circ R$.

\begin{figure}[H]
  \centering
  \begin{tikzpicture}
    
    \node (mon) {$\Mon$};
    \node[right of=mon, node distance=3cm] (nom) {$\Nom$};
    
    \draw[-latex] (mon) edge[bend left] node[above] {$L$} (nom);
    \draw[-latex] (nom) edge[bend left] node[below] {$R$} (mon);
    \node at ($(mon) !.5! (nom)$) {$\perp$};

    \node[left of=mon, node distance=3cm] (cat) {$\Cat$};
    \draw[-latex] (cat) edge[bend left] node[above] {$L'$} (mon);
    \draw[-latex] (mon) edge[bend left] node[below] {$R'$} (cat);
    \node at ($(cat) !.5! (mon)$) {$\perp$};

  \end{tikzpicture}
\end{figure}

The construction of Preller and Lambek corresponds to a free functor from
$\Cat$ to $\Nom$, which is equivalent to $L \circ L'$ by uniqueness of
the adjoint. Hence we just gave a factorization of their free functor.

\end{document}